\documentclass[a4paper,man,natbib,floatsintext]{apa6}

\usepackage{graphicx ,multicol,lipsum ,amssymb ,amsfonts ,color ,amsmath ,amsthm ,tikz,enumitem ,forest ,lscape ,relsize ,float ,pdfpages ,fancyhdr ,rotating ,array ,fancyhdr,kbordermatrix,blkarray,multirow,babel,inputenc,todonotes, censor,graphicx,multicol,lipsum,amssymb,amsfonts,color,graphicx,soul,amsmath,tikz,enumitem,forest,tikz-qtree,xcolor}
\usepackage{afterpage}
\usepackage[hyphens]{url}

\setstcolor{red}

\newcommand\tab[1][1cm]{\hspace*{#1}}

\usetikzlibrary{shapes,arrows}

\tikzstyle{cloud} = [draw, ellipse,fill=blue!20, node distance=2.5cm,minimum height=2em]
\tikzstyle{io} = [trapezium, trapezium left angle=70, trapezium right angle=110,minimum height=2em,text centered, draw=black, fill=blue!30]
\tikzstyle{decision} = [diamond, draw, fill=red!20, text width=5em, text badly centered, node distance=3cm, inner sep=5pt]
\tikzstyle{block} = [rectangle, draw, fill=blue!20, text width=5em, text centered, rounded corners, minimum height=4em]
 \tikzstyle{line} = [draw, -latex']

\newtheorem{theorem}{Theorem}
\newtheorem*{theorem*}{Theorem}

\newtheorem{definition}{Definition}
\newtheorem*{definition*}{Definition}

\newtheorem*{exmp*}{Example}

\title{Evaluating and Ranking Criteria for Mathematics Graduate Education Admissions through the Analytic Hierarchy Process}
\shorttitle{Ranking Graduate Admissions Criteria}
\author{Simon D. Nguyen}
\affiliation{University of California, Irvine}

\abstract{Graduate school admission committees consider many factors for admission into a mathematics PhD program, and aspiring applicants often wonder which factors are more important. Applicants discerning where to best dedicate their time may ask questions such as, "should I take graduate level courses or participate in research?" This paper seeks to answer such questions by constructing an ordinal ranking of admission criteria and provide insight on why some factors are more valued. Using a conjoint analysis method called Analytic Hierarchy Process from mathematician Thomas Saaty, this paper evaluates the relative influence certain predictors have on admissions into a mathematics PhD program. Additionally, this paper analyzes the difference in factor rankings between varying populations. For instance, results indicate that full professors tend to differ from their associate and assistant peers in their criteria rankings, and top 30 math PhD programs also differ in rankings when compared to programs outside the top 30. It is my hope that the ordinal rankings and its subsequent analyses will not only explain why some factors are valued more by the mathematics community but also provide some guidance to  potential graduate school applicants on where to dedicate their time as undergraduates.

\vfill

\textit{Keywords}: Admissions, Analytic Hierarchy Process, Education, Multi-Criteria Decision Analysis}
\begin{document}
\raggedbottom

\maketitle

\tableofcontents
\newpage

\listoffigures
\newpage

\section{Acknowledgements}
My foremost gratitude belongs to Professor Patrick Q. Guidotti for his time as my advisor. His supervision not only provided valuable guidance but more importantly helped developed my academic curiosity as a researcher. As this study was supported by the University of California, Irvine's Undergraduate Research Opportunities Program, I am appreciative for its generous funding. I would also like to thank the many mathematics faculty members who took the time to fill out my surveys, be interviewed, and provide feedback on my paper. I am ever thankful for the many ways they contributed to my study, and I am grateful that I was able to connect with many of them virtually. Finally, I would like to dedicate this paper to my father Norbert.
\newpage

\section{Introduction}
Time is a limited and transient resource, and undergraduate students are often confused on where to best commit their time to prepare themselves for graduate education. In a "short" four years, undergraduates must signal to admission committees that they are (1) academically proficient if not excellent, and (2) have research potential. These two goals are broadly the primary signals applicants seek to convey in an application, but the method in which a student conveys these signals will vary. As a result, students must make strategic decisions on where to dedicate their time.

For instance, is it better for a student to take the harder analysis sequence to signal academic excellence even at the expense of a lower grade? Likewise, should a student dedicate his summer studying for the math subject GRE or attend a full-time REU program? Choosing either options will improve an applicant's admission chances, but students are interested in which option provides the greater marginal benefit. Such questions suggest there is some loose preference of admission criteria, and this paper will seek to provide an ordinal ranking of these criteria.

Additionally, this paper's interest also lies in whether admission criteria preferences will differ between the types of professors and graduate programs. Questions such as "do full professors prefer research experience more than assistant professors, or do the top 30 mathematics PhD programs place less emphasis on standardized testing?" will be approached in this study. This paper, then, will also consider preference differences between professorships and graduate programs.

The structure of this paper begins broadly by guiding the reader through the current literature around graduate admissions but will narrow down to the paper's specific phenomenon of interest: admission criteria preferences. The paper then introduces the theoretical framework and methodology on how to analyze admission preferences followed by the results of the analysis. This paper concludes with a discussion of the findings and their relation to our phenomenon of interest.

\section{Literature Review}
\subsection{On Mathematics Education}
There does not exist much literature on the supply-side on graduate admissions. Rather, current literature is primarily interested on the demand-side of school choice, often using multi-criteria decision analysis (MCDA) to evaluate how an applicant chooses one school above others. However, this paper instead evaluates how schools choose an applicant on the supply-side using MCDA. The following section will give a guided tour through articles that provide background for this paper.

In Timmy Ma and Karen E. Wood’s article “Admission Predictors for Success in a Mathematics
Graduate Program,” \citep{Ma} the authors seek to evaluate the statistical significance of certain factors that predict the success of graduating from a mathematics PhD program. Such factors that contribute to graduation include one’s major, GRE scores, and the ranking of one’s undergraduate institution. The authors encourage future researchers to continue the evaluation of the many aspects of graduate education. With such inspiration, this paper seeks to continue the discussion by evaluating not the predictors of graduating from a mathematics PhD program but rather the predictors of admissions into a program.

Ma and Wood find that at first one's undergraduate cumulative GPA is negatively correlated with success in a mathematics PhD program. This, of course, is an interesting result as a student's GPA should reflect the student's academic proficiency. However, once Ma and Wood account for the tier of the applicant's undergraduate school, they observed that tier is more indicative of success in a mathematics PhD program than GPA. Based on this finding, Ma and Wood infer that higher ranked schools hold a greater level of mathematical rigor, therefore resulting in a lower GPA.

Additionally, the two authors also observe there is a positive correlation between the verbal GRE score and success in a math PhD program. In fact, the two find that the verbal score provides a greater correlation with graduating from a mathematics PhD program than the quantitative scores. This suggests that the verbal GRE score is a better predictor of success in a math PhD program than the quantitative GRE score.

The fact that tier and the standardized testing methods are predictors of success in a mathematics PhD program will be evaluated in our study. The reader will later see how much emphasis graduate admission committees place on these criteria and how they factor into the admission's process.

The next stop on our guided tour is with Joseph A. Gallian's "A History of Undergraduate Research in Mathematics" \citep{Gallian}. As former president of the Mathematics Association of America, Gallian provides a brief chronicle of mathematical research at the undergraduate level. The key take away from this article is that although undergraduate research is widely accepted and prominent today, this was not the case 40 years ago. 

Discussion on undergraduate involvement in mathematics research began seriously in the late 1950s, but was met with much opposition and skepticism amongst mathematicians. This opposition was primarily due to the fact that research in mathematics necessitates an understanding of mathematics at a level way beyond an undergraduate's. Then Mathematical Association of America president Lynn Steen summarizes the leading thought in the February, 1986 \textit{FOCUS} newsletter stating that

\begin{quote}
    "Typically, good undergraduates glimpse the frontiers of
science from association with faculty research projects.
However, research in mathematics is not like research in the
laboratory sciences. Whereas undergraduates can become
apprentice scientists in chemistry research laboratories,
research in mathematics is so far removed from the undergraduate curriculum that little if any immediate benefit to
the undergraduate program ever trickles down from faculty
research. As a general rule, undergraduates can neither participate in nor even understand the research activity of their
mathematics professors." \citep{MAAFocus}
\end{quote}

Nevertheless, although intense skepticism still persisted, undergraduate research in mathematics became more prevalent starting in 1987 when the National Science Foundation (NSF) funded the first eight Research Experience for Undergraduates (REU) programs.

However, today's math REU programs are much different than those in their humble beginnings, and it wasn't until much later that programs started to resemble our current REU programs. The February 1987 \textit{Notices of the American Mathematical Society} states that REU participants are only responsible for "generating data, working out examples in order to develop conjectures, or performing literature reviews" \citep{AMSNotice}. Gallian rightfully notes that "most people who run REUs would consider these activities as the starting point, not the end product of an REU." Such differences exemplify the progression and development of REU programs. As many academics consider research experience to be an important factor in graduate admissions, the recency of undergraduate research in mathematics outlined by Gallian will serve as contextual background on why certain results occur in our findings.

Finally, some statistics regarding diversity within mathematics education will provide the reader with background on inclusive excellence. It is clear that there is much to do to improve inclusive excellence in the math community. In fact, the 2017-2018 \textit{Mathematical and Statistical Sciences Annual Survey} from the American Mathematical Society states that only $8\%$ of new doctorates were from minority groups. The survey also states that women accounted for $29\%$ of graduates recipients in 2017, the third year of a consecutive decrease in women doctoral recipients \citep{AMSAnnualReport}.

Data from the NSF also shows that although there has been a steady increase amongst Hispanics in mathematics bachelor degrees, rates amongst African-Americans are on a decline \citep{NSFMinorityData}. Much more data regarding the state of minority participation in mathematics can be presented, but the above statistics should suffice to realize that diversity is an issue within mathematics.

\begin{figure}
    \centering
    \includegraphics[scale=.3]{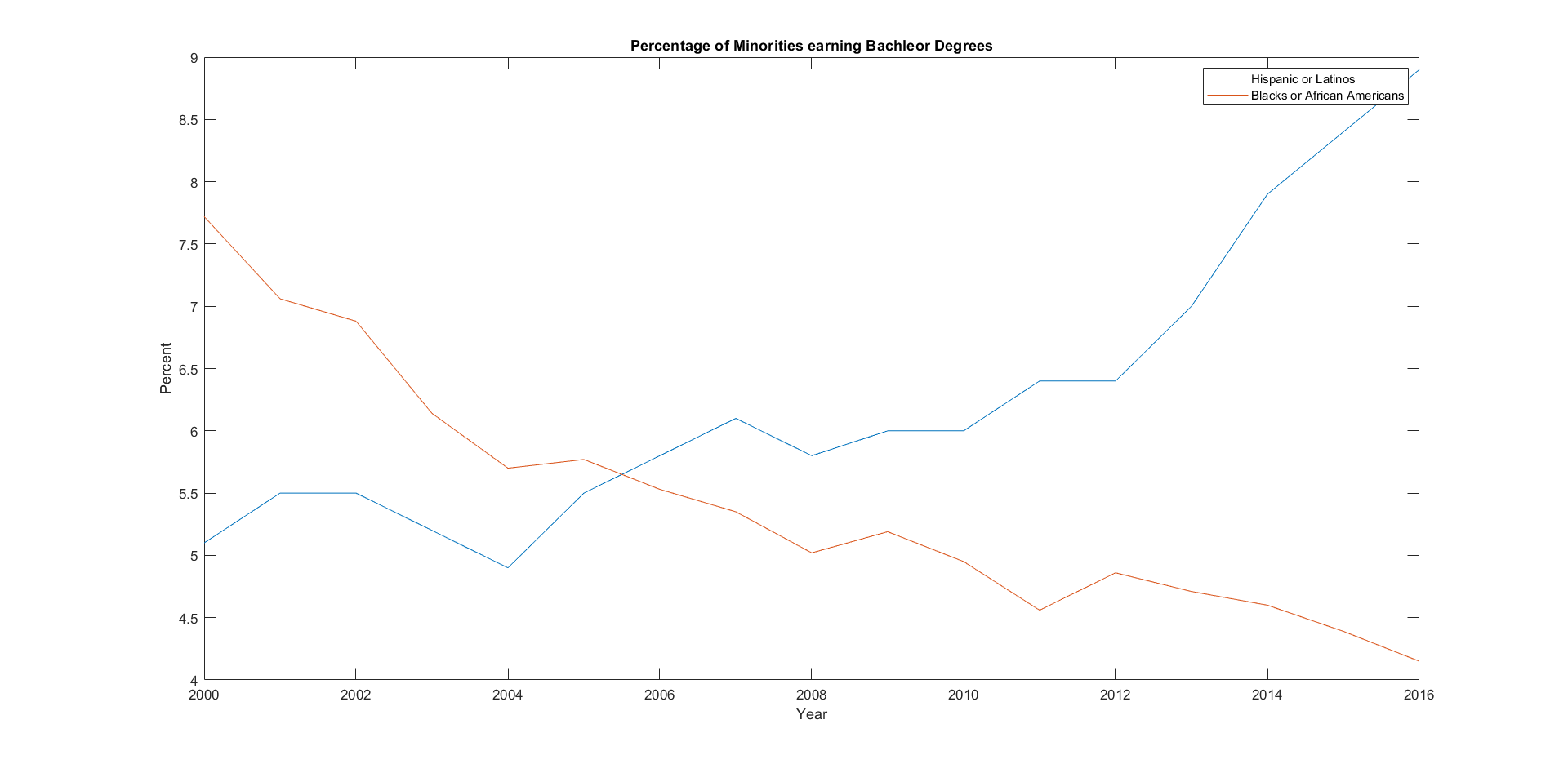}
    \caption{Percentages of Minorities Earning Bachelor Degrees}
    \label{figPercentages of Minorities earning Bachelor Degrees}
\end{figure}

\subsection{On The Analytic Hierarchy Process}

Thomas L. Saaty's Analytic Hierarchy Process (AHP) is a conjoint analysis tool widely used to make multi-criteria decisions based on a quantitative ranking. Saaty describes it as "a multi-valued logic. The AHP scale admits different intensities and captures priorities that indicate a range of possibilities for our preferences" \citep{TheoryandApplications}. Although we provide its theoretical foundations in the next section, the following contextual background will help with understanding Saaty's AHP.

The AHP structure is divided into three layers: the goal, the criteria, and the alternative choice. The criteria layer contains the factors the decision maker considers when attempting to achieve his goal, and the alternative choice layer contains the different options he has. Through a series of pairwise comparisons, the criteria are quantitatively assigned a relative importance value. Each choice is likewise assigned a relative importance value amongst each other with respect to each of the criteria. The best applicant is determined by weighing their values with respect to the overall goal. This sort of system, then, creates a ranking amongst applicants by taking into account all criteria. A visual representation with 5 criteria and 3 options is displayed in figure \ref{fig:Analytic Hierarchy Process Structure}.

\begin{figure}
    \centering



\tikzstyle{cloud} = [draw, ellipse,fill=blue!20, node distance=2.5cm,minimum height=2em]
\tikzstyle{io} = [trapezium, trapezium left angle=70, trapezium right angle=110,minimum height=2em,text centered, draw=black, fill=blue!30]
\tikzstyle{decision} = [diamond, draw, fill=red!20, text width=5em, text badly centered, node distance=3cm, inner sep=5pt]
\tikzstyle{block} = [rectangle, draw, fill=blue!20, text width=5em, text centered, rounded corners, minimum height=4em]
 \tikzstyle{line} = [draw, -latex']  

\begin{center}
    \begin{tikzpicture}[node distance = 2cm, auto]
    \node [decision] (Goal) {Goal};

    \node [block, below of=Goal, node distance = 4cm] (Criterion 3) {Criterion 3};
    \node [block, right of=Criterion 3, node distance = 2.5cm] (Criterion 4) {Criterion 4};
    \node [block, right of=Criterion 4, node distance = 2.5cm] (Criterion 5 ) {Criterion 5 };
    \node [block, left of= Criterion 3, node distance = 2.5cm] (Criterion 2) {Criterion 2};
    \node [block, left of=Criterion 2, node distance = 2.5cm] (Criterion 1) {Criterion 1};
    
    \node [cloud, below of=Criterion 3, node distance = 4cm] (Option1) {Option 2};
    \node [cloud, right of=Option1, node distance = 4cm] (Option2) {Option 3};
    \node [cloud, left of=Option1, node distance = 4cm] (Option3) {Option 1};

    \path [line] (Goal) -- (Criterion 3);
    \path [line] (Goal) -- (Criterion 4);
    \path [line] (Goal) -- (Criterion 5 );
    \path [line] (Goal) -- (Criterion 2);
    \path [line] (Goal) -- (Criterion 1);
    
    \path [line] (Criterion 3) -- (Option1);
    \path [line] (Criterion 4) -- (Option1);
    \path [line] (Criterion 5 ) -- (Option1);
    \path [line] (Criterion 2) -- (Option1);
    \path [line] (Criterion 1) -- (Option1);
    
    \path [line] (Criterion 3) -- (Option2);
    \path [line] (Criterion 4) -- (Option2);
    \path [line] (Criterion 5 ) -- (Option2);
    \path [line] (Criterion 2) -- (Option2);
    \path [line] (Criterion 1) -- (Option2);
    
    \path [line] (Criterion 3) -- (Option3);
    \path [line] (Criterion 4) -- (Option3);
    \path [line] (Criterion 5 ) -- (Option3);
    \path [line] (Criterion 2) -- (Option3);
    \path [line] (Criterion 1) -- (Option3);
\end{tikzpicture}
\end{center}
    \caption{Analytic Hierarchy Process Structure}
    \label{fig:Analytic Hierarchy Process Structure}
\end{figure}
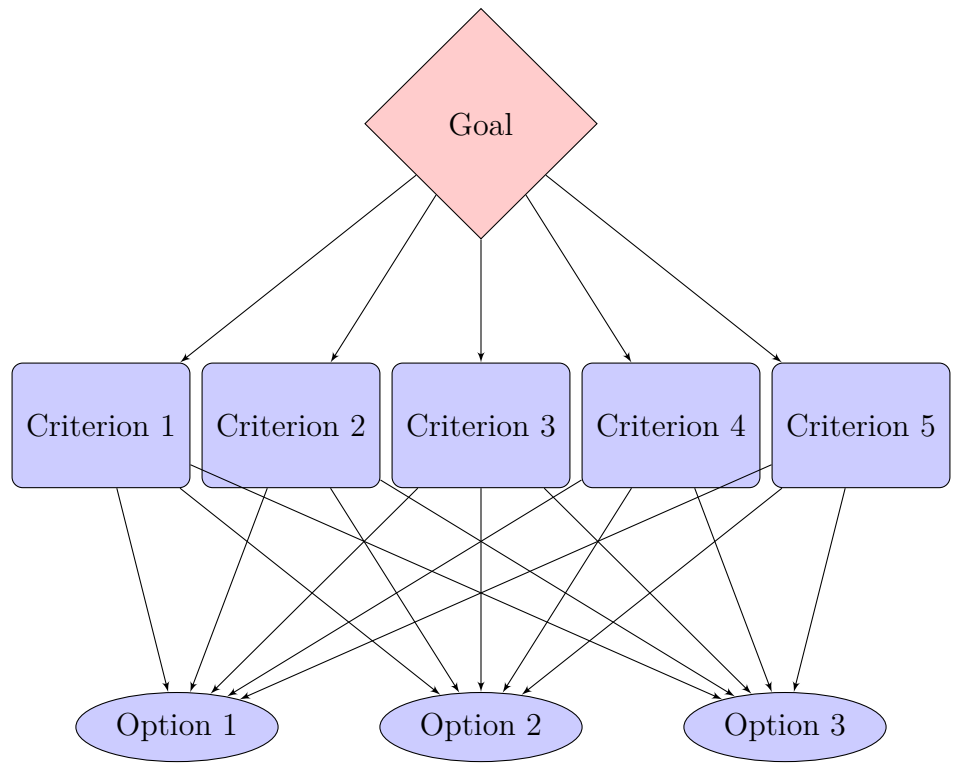

Rather than asking for a direct ordinal ranking, AHP uses a series of pairwise comparisons to evaluate the relationship between two variables. In this way, AHP is a conjoint analysis method used to construct a more accurate ordinal ranking based on the pairwise comparisons. Saaty writes that this higher degree of accuracy is due to the fact that "The most effective way to concentrate judgement is to take a pair of elements and compare them on a single property without concern for other properties or other elements" \citep{Saaty}. As such, it is not far-fetched to say that these pairwise comparisons can be considered the heart of Saaty's Analytic Hierarchy process.

The essence of this paper, then, is a multi-criteria decision analysis of graduate admissions. Although I will not be making any admission decisions, I will be using Saaty's to create a hierarchy (hence the name, Analytic \textit{Hierarchy} Process) of graduate admission criteria in mathematics.




\section{Theoretical Background}
Consider an individual comparing choices $C_1,C_2,..,C_n$ by pairwise comparison. The results of the individual's pairwise comparison are presented in the following matrix:

M
=

\begin{equation}\label{Matrix1}
\makeatletter\setlength\BA@colsep{10pt}\makeatother 
M=
\begin{blockarray}{c c c c c l}
        C_1 & C_2 & C_3 & C_4 & C_5 & \\
    \begin{block}{[@{\hspace{1pt}} c @{\hspace{1pt}} c @{\hspace{1pt}} c @{\hspace{1pt}} c @{\hspace{1pt}} c @{\hspace{1pt}}] @{\hspace{5pt}} l}
    \frac{w_1}{w_1} &  \frac{w_1}{w_2} & \hdots & \frac{w_1}{w_3} & \frac{w_1}{w_4} & C_1 \\
    \frac{w_2}{w_1} &  \frac{w_2}{w_2} & \hdots & \frac{w_2}{w_3} & \frac{w_2}{w_4} & C_2 \\
    \vdots & \vdots & \ddots & \vdots & \vdots & \vdots \\
    \frac{w_3}{w_1} &  \frac{w_3}{w_2} & \hdots & \frac{w_3}{w_3} & \frac{w_3}{w_4} & C_3 \\
    \frac{w_4}{w_1} &  \frac{w_4}{w_2} & \hdots & \frac{w_4}{w_3} & \frac{w_4}{w_4} & C_4 \\
    \end{block}
\end{blockarray}
\end{equation}
such that weight $w_i$ refers to choices $C_i \ \forall \ i=1,2,..,n$. Note that this produces a positive diagonal matrix with multiplicative inverses across the diagonal of ones. That is, $M_{ij}=\frac{1}{M_{ji}} \ \forall \ i,j=1,..,n$. The theory behind Thomas L. Saaty's Analytic Hierarchy Process is based on evaluating this pairwise comparison matrix. As a result, it is important to understand Saaty's definition of when a comparison matrix is consistent.

\begin{definition}[Consistent]
    A matrix M is consistent if $M_{ij}=M_{ik} M_{kj}$
\end{definition}

To demonstrate, suppose an individual has a preference intensity of $\zeta_1$ between choices $C_i$ and $C_j$, $\zeta_2$ between choices $C_j$ and choices $C_k$, and $\zeta_3$ between choices $C_i$ and $C_k$. We can write these pairwise comparisons as

\begin{equation}
        \frac{w_i}{w_j}= \zeta_1 \tab \label{Preference Intensity 1} \tag{Preference Intensity for $C_i$ to $C_j$}
\end{equation}
\begin{equation}
        \frac{w_j}{w_k}= \zeta_2 \tab \label{Preference Intensity 2} \tag{Preference Intensity for $C_j$ to $C_k$}
\end{equation}
\begin{equation}
    \frac{w_i}{w_k}= \zeta_3 \tab \label{Preference Intensity 3} \tag{Preference Intensity for $C_i$ to $C_k$}
\end{equation}
Rewriting $w_i=\zeta_1w_j$ and $w_k = \frac{w_j}{\zeta_2}$ and plugging these two values in the Preference Intensity for $C_i$ to $C_k$, we get 
$$\zeta_3 = \frac{\zeta_1w_j}{\frac{w_j}{\zeta_2}} = \zeta_1\zeta_2$$

This implies that when an individual is perfectly consistent, the preference intensity between any two choices $C_i$ and $C_k$ should be equal to the preference intensity between choices $C_i$ and $C_j$ times the intensity between choices $C_j$ and $C_k$. However, this is not necessarily true in practice. Take for instance the following example.
\begin{exmp*} Consider an individual trying to rank three types of candies: lollipops, taffy, and chocolate. Reconstructing matrix \ref{Matrix1}, we get
\begin{equation}\label{InconsistencyExample}
\kbordermatrix{
    &Lollipops&Taffy&Chocolate\\
    Lollipops&1&\frac{w_L}{w_T}&\frac{w_L}{w_C}\\
    Taffy&\frac{w_T}{w_L}&1&\frac{w_T}{w_C}\\
    Chocolate&\frac{w_C}{w_L}&\frac{w_C}{w_T}&1\\}
\end{equation}
Assume the individual provides a ranking in which he moderately prefers lollipops over taffy, strongly prefers chocolate over lollipops, and is indifferent between chocolate and taffy. Using the scale presented in table \ref{tab:StudyScaling}, the comparison matrix \ref{InconsistencyExample} is written as
\begin{equation}\label{InconsistencyExample2}
\kbordermatrix{
    &Lollipops&Taffy&Chocolate\\
    Lollipops&1&2&\frac{1}{3}\\
    Taffy&\frac{1}{2}&1&1\\
    Chocolate&3&1&1\\}
\end{equation}
Since $\frac{Lollipop}{Taffy} = \frac{2}{1}$ and $\frac{Taffy}{Chocolate} = \frac{1}{1}$, we expect the individual's preference intensity between lollipops and chocolate to be
$$\frac{Lollipop}{Chocolate} = \frac{Lollipop}{Taffy} \frac{Taffy}{Chocolate} = \frac{2}{1}\frac{1}{1}=2$$
However, the artificial example presents a preference of $\frac{1}{3}$. The individual's preference ranking can intuitively be seen to be inconsistent as if we wanted to present a strict ordinal ranking, we would run into contradictions as the individual prefers lollipops over taffy yet prefers chocolate, equivalent to taffy, over lollipops. This example portrays a matrix that is inconsistent by Saaty's definition.
\end{exmp*}
Such inconsistencies in one's preferences are the reasons why classic utility methods from economics are unsuitable for this study as such methods require the condition of transitivity. Saaty's Analytic Hierarchy Process, however, accounts for such intransitivities with the following measurements of a comparison matrix's consistency.

\begin{definition}[Consistency Index]
    The consistency index (CI) is the negative average of the other roots of the characteristic polynomial of the preference matrix \citep{Saaty}. This is denoted as
\begin{equation*}\label{ConsistencyIndex}
        CI = \frac{\lambda_{max}-n}{n-1}
    \end{equation*}
\end{definition}
\begin{definition}[Random Index]
    The random index (RI) is the average of the confidence indices computed from a large number of preference matrices of order $n \times n$ with random entries \citep{Saaty}. After running 50,000 random matrices, Thomas Saaty and Liem T Tran's obtain the random index in table \ref{tab:RandomIndexTable} of the appendix \citep{Saaty_Tran}.
\end{definition}
\begin{definition}[Consistency Ratio]
    The consistency ratio is the ratio between the consistency index and the random index. That is,
    $$CR = \frac{CI}{RI}$$
\end{definition}
Saaty determines the consistency index to be the difference between the largest eigenvalue of an inconsistent matrix and consistency index of a perfectly consistent matrix (equivalently the size of the matrix), divided by $n-1$. The consistency index is then compared to the random index defined above, resulting in the consistency ratio. Saaty determines that any consistency ratio less than $0.1$ is considered consistent \citep{Saaty}. A brief derivation is given in the appendix for further information.



Note that Saaty's Analytic Hierarchy Process additionally contains a third layer to rank option choices based on the criteria weights as described in the Literature Review. However, as the goal of this study is merely to rank the admission criteria, this paper will only be performing the first phase of Saaty's Analytic Hierarchy Process.

\section{Data and Methodology}
\setcounter{secnumdepth}{0}

\subsection{Variables}

For this study, 12 factors were slected to be ranked: Background, Major, Cumulative GPA, Math GPA, Research, Interview Performance, Upper Divsion Math Grades, Lower Division Math Grades, Quantitative GRE score, Verbal GRE Score, Math Subject GRE Score, Undergraduate Institution Tier. The following definitions of each variable were provided to the participants in our independent survey:

\begin{itemize}

    \item \textbf{Background/Demographics (Back)}: Survey participants were told to consider factors such as, but not limited to, race, gender, age, marital status and their department's policy on inclusive excellence.

    \item \textbf{Major}: Major is determined to be a variable in which one is a math major or non-math major.

    \item \textbf{Cumulative and Math GPA (CGPA/MGPA)}: Survey participants were told to assume that academic rigor and GPA is consistent between institutions. As a result, applicants assumed that GPA was not dependent upon an institution when comparing the relevance importance of admission criteria. This allows the criteria of CGPA/MGPA to be evaluated independent of grade inflation.
    

    \item \textbf{Research Experience}: Survey participants were told that research experience consists of substantial work in which the applicant produces insightful, though not necessarily novel and revolutionary, mathematical research.
    
    \item \textbf{Interview Performance}: Survey participants were told to consider factors such as, but not limited to, mathematical understanding, passion, research interests, and career goals.

    \item \textbf{Lower Division Math Grades (LDM)}: Survey participants were told to assume applicants have taken core lower division courses such as Single Variable Calculus, Multivariable Calculus, Linear Algebra, and Differential Equations.
    
    \item \textbf{Upper Division Math Grades (UDM)}: Survey participants were told to assume applicants have taken core upper division courses such as Advanced Calculus, Abstract Algebra, Linear Algebra, and Probability Theory.

    \item \textbf{Quantitative, Verbal, and Subject GRE Scores (GREQ/GREV/GRES)}: The subject math GRE score, quantitative GRE score, and verbal GRE score are determined by the Educational Testing Service.

    \item \textbf{School Tier}: Defined by the American Mathematical Society in their annual report \citep{AMSAnnualReport}. Survey participants were also told to consider the reputation of the mathematics' department.

\end{itemize}

\subsection{Analysis Methodology}

The goal was determined to be successful admission into a Mathematics PhD program based on the 12 criteria detailed above. In order to determine the relative weights of each criterion, the comparison matrix needs to be filled in as described in matrix \ref{Matrix1}. A reformatted table is shown in table \ref{tab:Criteria_Criter_Matrix} to provide a better visualization.

\begin{table}
    \centering
    \scalebox{0.6}
    {
    \begin{tabular}{l|c c c c c c c c c c c c |c|c}
         &Back&Major&CGPA&MGPA&Research&Interview&UDM&LDM&GREQ&GREV&GRES& School Tier &Sum&Weights\\
         \hline
         Back& 1& \textbf{(a)}& & & & & & & & & & & &\\
         Major& \textbf{(b)}& 1& & & & & & & & & & & &\\
         CGPA& & & 1& & & & & & & & & & &\\
         MGPA& & & & 1& & & & & & & & & &\\
         Research& & & & & 1& & & & & & & & &\\
         Interview& & & & & & 1& & & & & & & &\\
         UDM& & & & & & & 1& & & & & & &\\
         LDM& & & & & & & & 1& & & & & &\\
         GREQ& & & & & & & & & 1& & & & &\\
         GREV& & & & & & & & & & 1& & & &\\
         GRES& & & & & & & & & & & 1& & &\\
         School Tier& & & & & & & & & & & & 1& &\\
    \end{tabular}
    }
    \caption{Criteria Comparison Matrix}
    \label{tab:Criteria_Criter_Matrix}
\end{table}
In order to get the best estimate of the relative weights, surveys were sent out to mathematics faculty members across the United States. Their scores for each individual cell were then added up and divided by the number of responses to get the mean of each cell. The AHP analysis was ran on this set of numbers based on the response mean.

Each cell compares the relative importance between row criteria and column criteria. Using the scale in table \ref{tab:StudyScaling} of the appendix, survey participants were asked to compare all 12 criteria through a series of pairwise comparison questions via an online survey on Qualtrics. Each pairwise comparison question was presented individually for a total of 66 pairwise comparisons questions. The responses were then entered into an Excel spreadsheet as a data set. For instance, if a faculty member considers the criterion of Background moderately more important than one's Major, the marked $\textbf{(a)}$ in table \ref{tab:Criteria_Criter_Matrix} will be entered as a $3$. Since cell $\textbf{(b)}$ compares Cumulative GPA to Major, the cell is ipso facto ranked a $\frac{1}{3}$. 


To get the relative weights of each criteria, a row total for each criteria was calculated and then divided by the total across all criteria. The results are the two columns denoted "sum" and "weights" The weights in the right hand column signify the relative significance of each criteria. That is, the resulting numbers are the weights admission committees place on each criterion relative to the other criteria.

In addition to running the AHP model on the overall mean of all responses, the AHP model was ran on each response individually to produce a data set with the relative weights of each criterion for each response. Responses were then categorized by whether respondents have served on graduate admission committees or not (Yes, No, Blank), professorship rank (Full, Associate, Assistant, Visiting Assistant, Lecturer, Blank), and PhD program ranking (Group 1, Group 2, Group 3, Blank). Mathematics PhD programs that ranked in the top 30 were defined as Group 1, PhD programs ranked 31 to 50 as Group 2, and all others as Group 3. Rankings were determined by U.S. News \& World Report \citep{USWorldNewsRanking}. The schools and their categorization used in my study are provided in Table \ref{tab:School Categorization} in appendix.

Statistical tests were performed in MatLab and occasionally Stata. Dividing the responses by service on graduate admission committees, professorship rank, and program tier allowed analysis on the statistical differences between varying populations. ANOVA and two sample t-tests were conducted to find population differences. Pearson Correlation tests were conducted to find correlation between factor rankings.

Additionally, online interviews with professors were conducted to provide qualitative evidence and insight into our findings. In total, 13 professors from various institutions across the United States were interviewed. Interviewees were primarily presented preliminary findings and asked their personal opinions on the data. All quotations were used with their approval.


\section{Results}
\subsection{Ordinal Ranking}

A breakdown of the survey responses can be seen in table \ref{tab:Survey Response Breakdown}. An additional categorization with the number of responses from each school can be found in table \ref{tab:School Categorization} in the Appendix. Note that the lecturer who has served on a graduate admission's committee did so previously as an assistant professor at a different institution.

\begin{table}
    \centering
    \begin{tabular}{c l|c|c|c|c}
        \multicolumn{6}{c}{Served on Graduate Admission Committee}\\
        \multirow{7}*{\rotatebox{90}{Professorship Rank}}  
         && Yes& No & Blank &Sum\\
         \cline{2-6}
         &Full& 45&3&1&49 \\
         &Associate& 8&4&0&12\\
         &Assistant&10&7&0&17\\
         &Visiting Assistant&0&3&0&3\\
         &Adjuncts/Instructors&1&6&0&7\\
         &Blank&3&3&14&20\\
        \cline{2-6}
         &Sum&67&26&15&108
    \end{tabular}
    \caption{Survey Response Breakdown}
    \label{tab:Survey Response Breakdown}
\end{table}

The criteria comparison matrix for those who have served on graduate admission committees is provided in table \ref{tab:Criteria to Criteria Final}. The reader is reminded that the value provided compares the row factor to the column factor referring to the associate value in table \ref{tab:StudyScaling}. As a result, each cell will signify the relative importance the row variable has over the column variable. For instance, whether an applicant is a math major or not is slightly to moderately more (1.596) important than the applicant's background.

\begin{table}
    \centering
    \scalebox{0.7}
    {
    \begin{tabular}{l|c c c c c c c c c c c c}
        &Back&Major&CGPA&MGPA&Research&Interview&UDM&LDM&GREQ&GREV&GRES&Tier\\
        \hline
        Back&1&0.627&0.614&0.471&0.648&0.878&0.452&0.850&0.867&1.111&0.782&0.827\\
        Major&1.596&1&1.353&0.835&0.949&1.198&0.648&1.020&1.261&1.523&0.972&1.159\\
        CGPA&1.628&0.739&1&0.568&0.769&1.148&0.514&0.815&1.117&1.432&0.844&1.082\\
        MGPA&2.125&1.198&1.761&1&1.554&1.806&0.931&1.662&1.789&2.182&1.465&1.559\\
        Research&1.543&1.054&1.301&0.643&1&1.659&0.951&1.65&1.755&2.015&1.377&1.511\\
        Interview&1.139&0.835&0.871&0.554&0.603&1&0.651&0.992&1.258&1.579&0.997&1.122\\
        UDM&2.212&1.543&1.946&1.075&1.052&1.536&1&2.506&2.191&2.372&1.731&1.813\\
        LDM&1.176&0.980&1.227&0.602&0.606&1.008&0.399&1&1.361&1.670&0.941&1.150\\
        GREQ&1.153&0.793&0.895&0.559&0.570&0.795&0.456&0.735&1&1.659&0.765&1.077\\
        GREV&0.900&0.657&0.698&0.458&0.496&0.633&0.422&0.599&0.603&1&0.597&0.747\\
        GRES&1.278&1.029&1.185&0.683&0.726&1.003&0.578&1.062&1.306&1.674&1&1.579\\
        Tier&1.209&0.863&0.924&0.642&0.662&0.891&0.551&0.870&0.928&1.339&0.633&1\\
    \end{tabular}
    }
    \caption{Criteria Comparison Matrix}
    \label{tab:Criteria to Criteria Final}
\end{table}

The final rankings for all samples and for only those who have served on a graduate admission committee are respectively provided in table \ref{tab:All Samples and All Factors} and table \ref{tab:Served on Graduate Committee and All Factors}. A summary of the statistics for those who have served on a graduate admission committee is provided in table \ref{tab:Statistics of Variables}. Both Analytic Hierarchy Processes also have a consistency ratio below $0.1$ indicating that the pairwise comparisons from the study'/s survey are consistent. Future tables will contain data from only those who have served on graduate admissions committees unless otherwise stated.

\begin{table}[]
    \begin{minipage}{.5\linewidth}
      \centering
        \def\arraystretch{.8}
        \begin{tabular}{c|c|m{2cm}}
        Rank & Factor & Relative Importance\\
        \hline
        1&UDM&13.415 \%\\
        2&MGPA&12.17 \%\\
        3&Research&10.525 \%\\
        4&Major&8.643 \%\\
        5&GRES&8.38 \%\\
        6&LDM&7.751 \%\\
        7&CGPA&7.454 \%\\
        8&Interview&7.419 \%\\
        9&Tier&6.723 \%\\
        10&GREQ&6.688 \%\\
        11&Back&5.838 \%\\
        12&GREV&4.995 \%\\
        \hline
        &Consistency Ratio&0.0046145
    \end{tabular}
    \caption{All Responses}
    \label{tab:All Samples and All Factors}
    \end{minipage}%
    \begin{minipage}{.5\linewidth}
      \centering
      \def\arraystretch{.8}
      \begin{tabular}{c|c|m{2cm}}
        Rank & Factor & Relative Importance\\
        \hline
        1&UDM&13.202 \%\\
        2&MGPA&12.108 \%\\
        3&Research&10.447 \%\\
        4&Major&8.647 \%\\
        5&GRES&8.248 \%\\
        6&CGPA&7.767 \%\\
        7&LDM&7.466 \%\\
        8&Interview&7.268 \%\\
        9&GREQ&6.876 \%\\
        10&Tier&6.572 \%\\
        11&Back&6.131 \%\\
        12&GREV&5.268 \%\\
        \hline
        &Consistency Ratio&0.0045437
    \end{tabular}
    \caption{Served on Graduate Committee}
    \label{tab:Served on Graduate Committee and All Factors}
    \end{minipage} 
    \label{tab:Final Rankings}
\end{table}

\begin{table}[!htp]
    \centering
    \scalebox{0.7}
    {
    \begin{tabular}{l|c c c c c c c c c c c c c}
        &Back&Major&CGPA&MGPA&Research&Interview&UDM&LDM&GREQ&GREV&GRES&Tier\\
        \hline
        Mean&5.2 \%&7.6 \%&7.0 \%&10.8 \%&9.7 \%&7.3 \%&12.5 \%&7.7 \%&7.8 \%&6.3 \%&9.9 \%&8.1 \%\\
        Standard Deviation&2.7 \%&3.0 \%&2.0 \%&2.3 \%&3.1 \%&3.2 \%&2.2 \%&2.3 \%&2.7 \%&2.7 \%&3.0 \%&2.6 \%\\
        Maximum&11.5 \%&15.6 \%&12.3 \%&16.3 \%&16.8 \%&17.0 \%&18.2 \%&12.6 \%&12.8 \%&16.9 \%&14.8 \%&14.6 \%\\
        Minimum&2.2 \%&3.2 \%&3.3 \%&4.3 \%&4.7 \%&2.2 \%&8.4 \%&3.1 \%&3.0 \%&2.6 \%&3.1 \%&2.7 \%\\
    \end{tabular}
    }
    \caption{Summary of Statistics for those who have served on a graduate admission committee}
    \label{tab:Statistics of Variables}
\end{table}

\subsection{Population Differences}

The following section outlines differences between different groups. It is organized such that differences between the types of professors are first described, followed by the differences between those who have served on a graduate admission committee or not, and finally differences between PhD program groups.

\subsubsection{Professorship Rank}

There is firstly a statistically significant difference (\textit{p =.028, t(86) = -2.235}) between how full professors and other faculty (associate professors, assistant professors, visiting assistant professors and lecturers) value lower division coursework. Full professors ranked lower division classes lower (\textit{M = .075, SD = .020}) compared to the other instructors (\textit{M = .085, SD = .024}). This means that full professors on average value lower division coursework at 7.5\% whereas other instructors value them at 8.5\%. When comparing just lecturers to full professors, the gap is even larger (\textit{p =.002, t(54) = -3.30}) with lecturers valuing lower division courses at 10.13\% \textit{(SD=.0198)} and full professors at 7.47\% \textit{(SD=.020)}. Note that this analysis contains all samples, including those who have not served on a graduate admission committee. 

\begin{figure}[!htp]
    \centering
    \includegraphics[scale=.3]{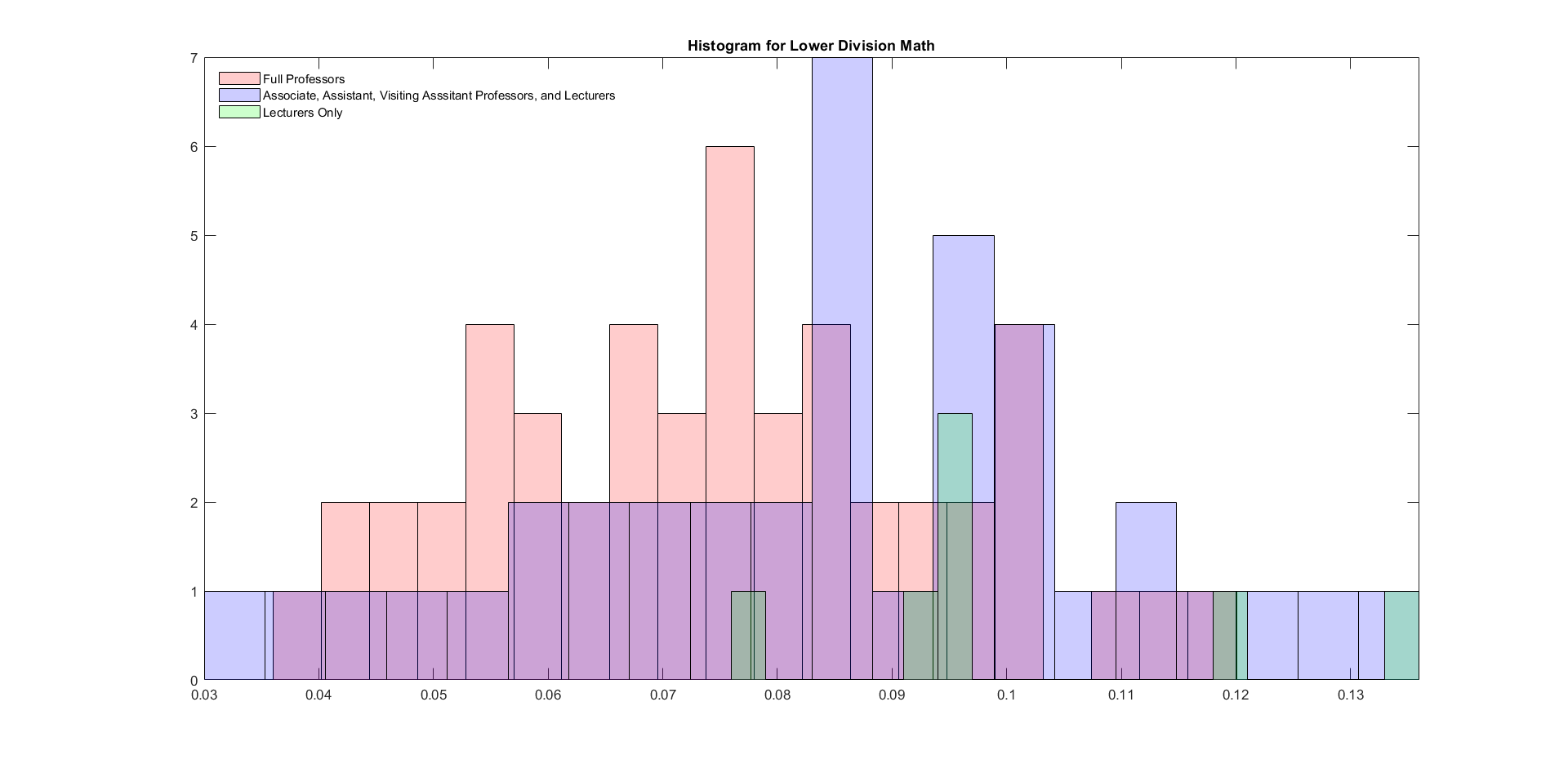}
    \caption{Histogram: Lower Division Math Coursework by Professorship Rank}
    \label{fig:Lower Division Population Difference 1}
\end{figure}

There is also a statistically significant difference (\textit{p =.005, t(76) = 2.879}) between how full professors and how associate/assistant professors collectively value the quantitative GRE. A two sample t-test shows that full professors statistically rank the quantitative GRE higher at 8.2\% \textit{(SD = .029)} than their associate and assistant colleagues at 6.5\%  \textit{(SD = .021)}. This difference also occurs with the math subject GRE score, but is only marginally statistically significant at the $.05\%$ level (\textit{p =.056, t(76) = 1.94}). Whereas full professors value the math subject GRE exam at 10.21\% (\textit{SD=.030}), associate and assistant professors value the exam at 8.89\% (\textit{SD=.026}).

\begin{figure}[!htp]
    \centering
    \includegraphics[scale=.3]{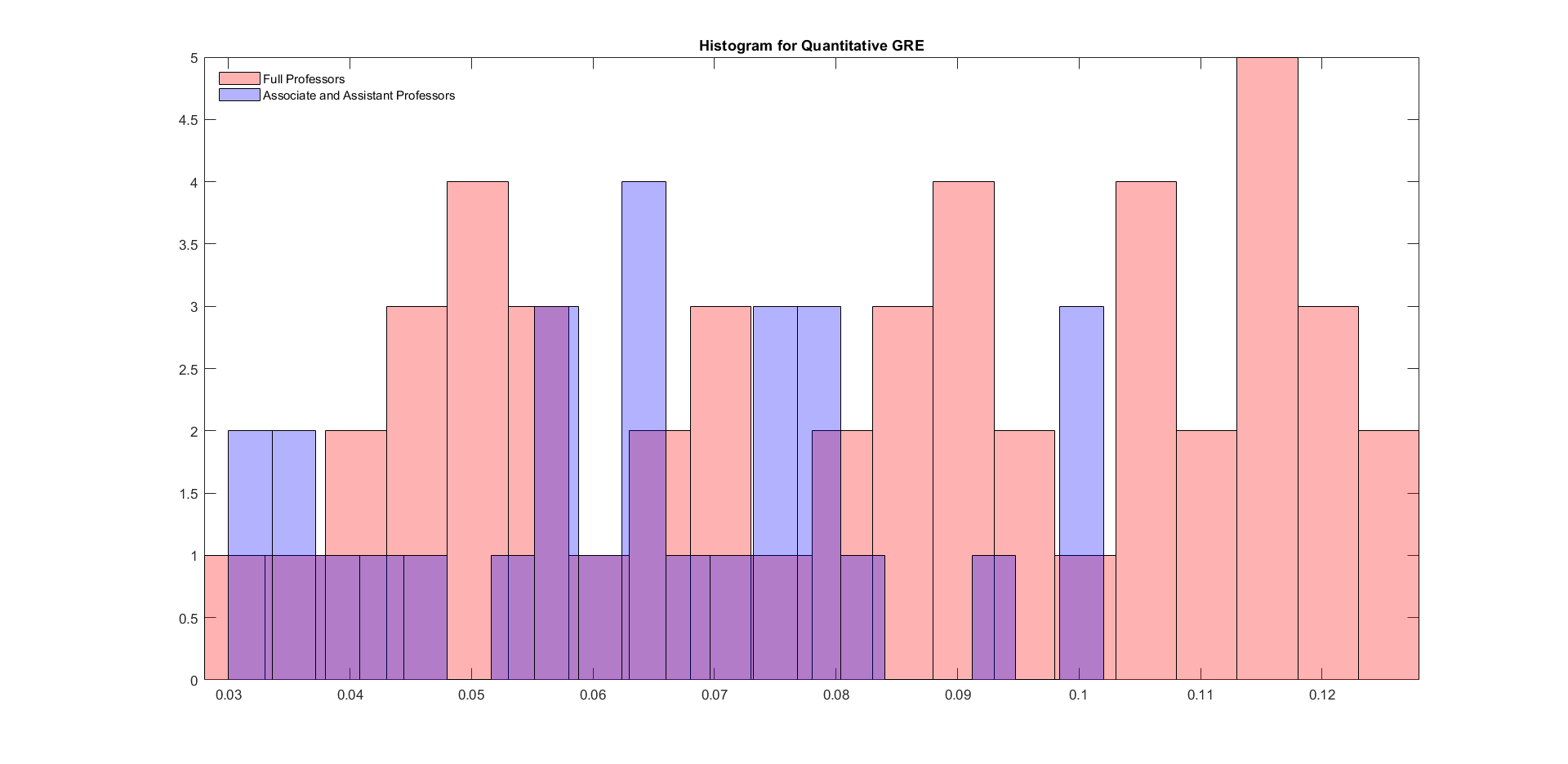}
    \caption{Histogram: Quantitative GRE by Professorship Rank}
    \label{fig:Quantitative GRE Population Difference}
\end{figure}

\begin{figure}[!htp]
    \centering
    \includegraphics[scale=.3]{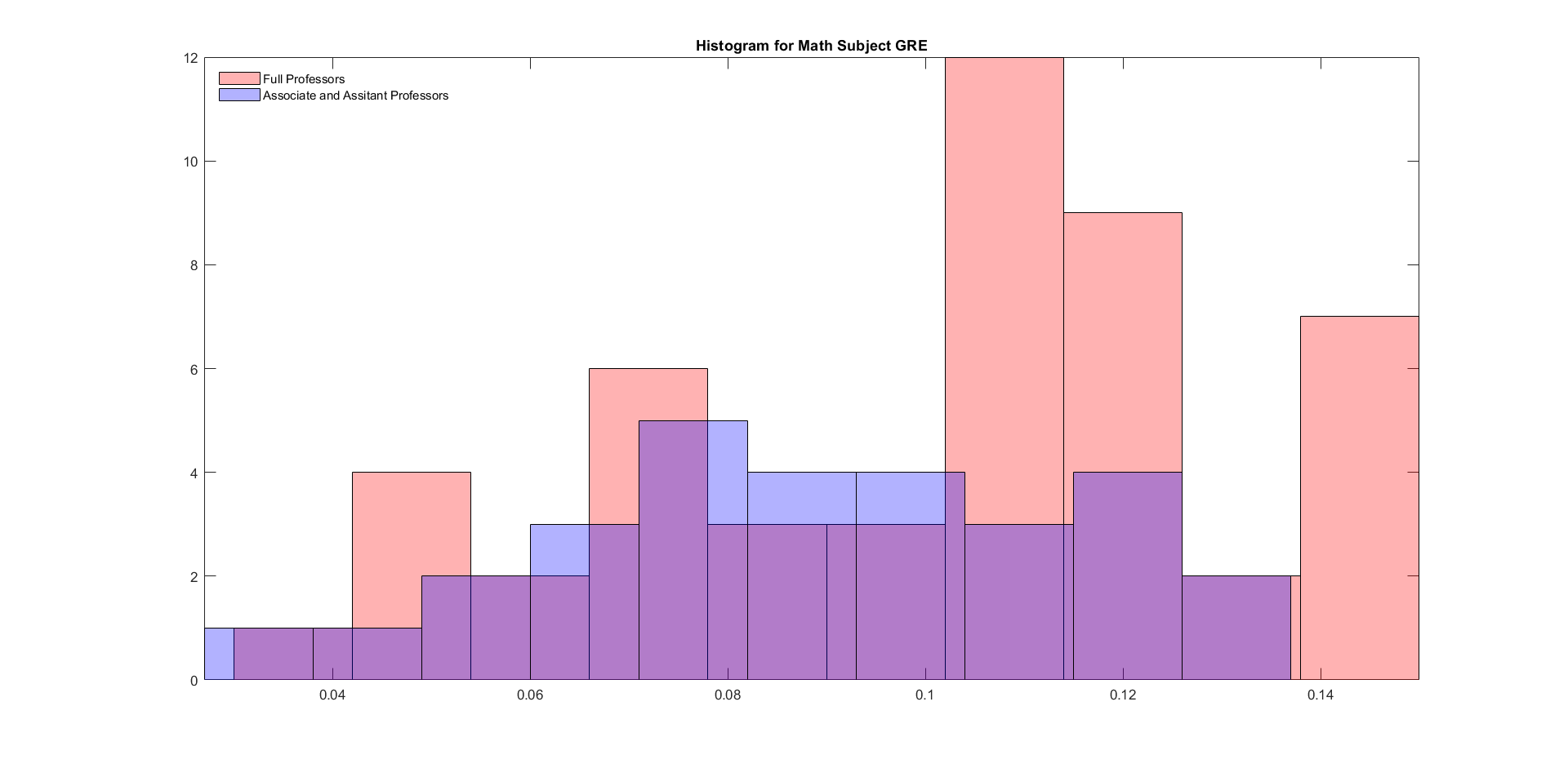}
    \caption{Histogram: Math Subject GRE by Professorship Rank}
    \label{fig:Math Subject GRE Population Difference}
\end{figure}

When testing between full and assistant professors, there is a statistical difference on how the two populations rank research experience (\textit{p =.050, t(64) = -2.00}). On average, full professors (\textit{M = .956, SD = .031}) tend to value research experience lower than assistant professors (\textit{M = .114, SD = .035}). However, this difference does not occur when testing between full and associate professors (\textit{p =.910, t(59) = 0.114}).

\begin{figure}[!htp]
    \centering
    \includegraphics[scale=.3]{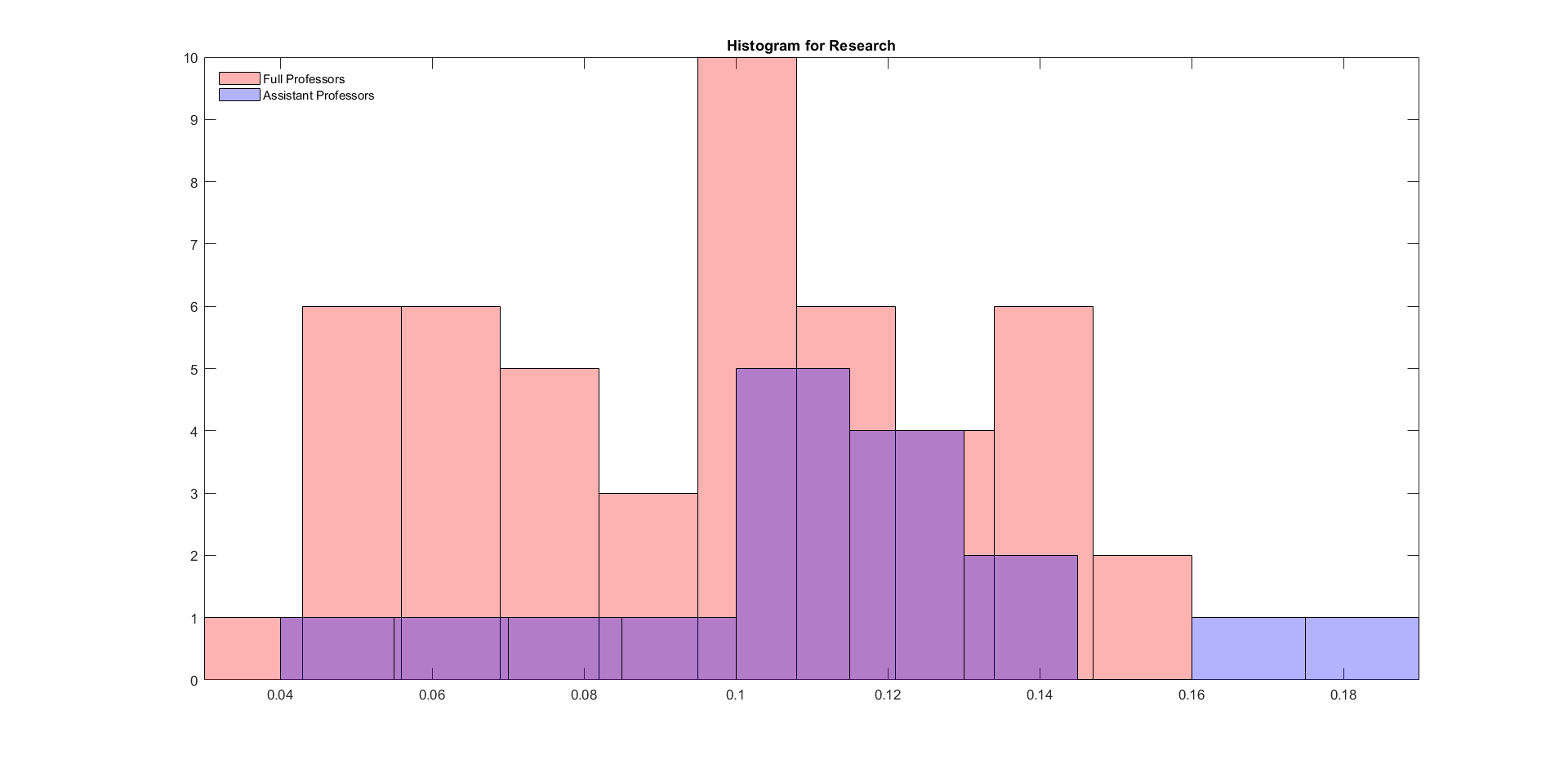}
    \caption{Histogram: Undergraduate Research Experience by Professorship Rank}
    \label{fig:Full vs. Assistant Professors over Research Histogram}
\end{figure}

At the inception of this study, there was a hypothesis that older full professors would rank an applicant's background lower compared to younger associate and assistant professors. However, there were no statistically significant differences \textit{(F(2,75)=.53, p =.591)} between the three types of professors. Full professors ranked background at 5.25\% \textit{(SD=.030}), associate professors ranked background at 4.75\% \textit{(SD=.026}) and assistant professors at 5.83\% \textit{(SD=.026}).

\begin{figure}[!htp]
    \centering
    \includegraphics[scale=.3]{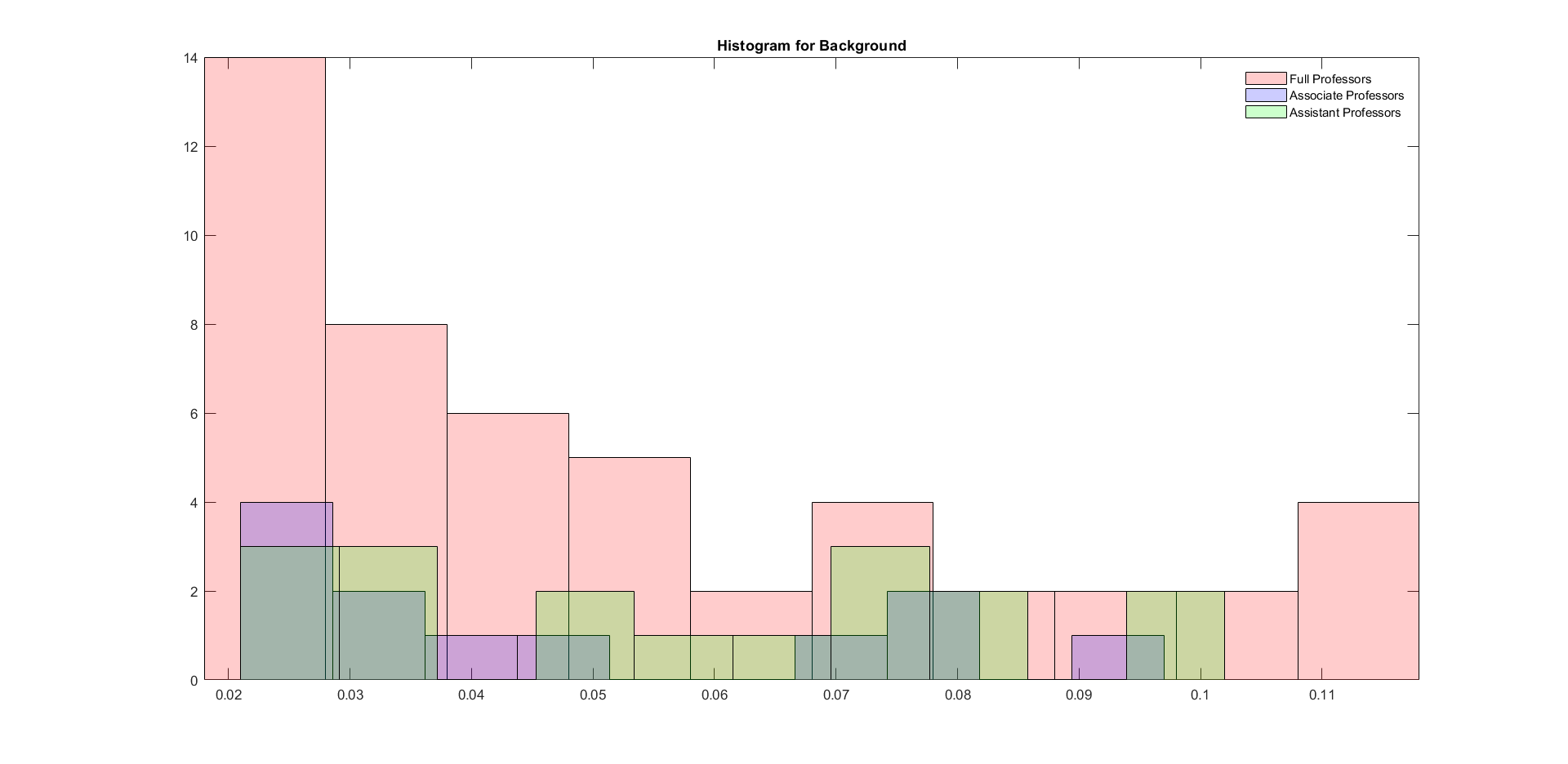}
    \caption{Histogram: Background by Professorship Rank}
    \label{fig:Full vs. Associate and Assistant for Background}
\end{figure}

\subsubsection{Committee Service}

There were no statistically significant differences when partitioning by those who have served on graduate admission committees or not. Table \ref{tab:Differences in those who have/have not served on Graduate Admission Committees} contains the relevant statistics, of which lower division coursework is the closest factor to a statistically significant difference.

\begin{table}[!htp]
    \centering
    \scalebox{0.7}
    {
    \begin{tabular}{l|c c c c c c c c c c c c c}
        &Back&Major&CGPA&MGPA&Research&Interview&UDM&LDM&GREQ&GREV&GRES&Tier\\
        \hline
        Mean of Yes Committee&0.052&0.076&0.070&0.108&0.097&0.073&0.125&0.077&0.078&0.063&0.099&0.081\\
        Mean of No Committee&0.048&0.075&0.067&0.113&0.1&0.075&0.128&0.086&0.07&0.053&0.103&0.08\\
        SD of Yes Committee&0.027&0.03&0.02&0.023&0.031&0.032&0.022&0.023&0.027&0.027&0.03&0.026\\
        SD of No Committee&0.026&0.025&0.017&0.017&0.032&0.026&0.02&0.019&0.02&0.017&0.027&0.025\\
        t-stat&0.549&0.155&0.709&-0.987&-0.473&-0.285&-0.725&-1.725&1.404&1.759&-0.602&0.157\\
        p-value&0.584&0.877&0.480&0.326&0.637&0.776&0.470&0.088&0.164&0.082&0.549&0.876\\
    \end{tabular}
    }
    \caption{Differences in those who have/have not served on Graduate Admission Committees}
    \label{tab:Differences in those who have/have not served on Graduate Admission Committees}
\end{table}

\subsubsection{Program Rankings}

When dividing the PhD programs into three Groups as mentioned in the Methodology section, Group 1 (the top 30 schools) valued background statistically less than schools outside of Group 1 (\textit{p =.033, t(74) = -2.18}). Group 1 schools ranked background with a mean of 3.7935\% (\textit{SD = .015}) while schools outside ranked background with a mean of 5.4252\% (\textit{SD = .029}). When testing individually between Groups, there is a smaller gap between Group 1 and Group 2 (\textit{p =.058, t(38) = -1.952}), with Group 1's mean at 3.7935\% \textit{(SD= .0152)} and Group 2 at 5.2868\% \textit{(SD= .028)}. However, the gap becomes more apparent when testing between Group 1 and Group 3 (\textit{p =.034, t(50) = -2.183}) with Group 3's average at 5.5174\% \textit{(SD= .030)}. There is no statistically significant difference between Group 2 and Group 3's value on background (\textit{p =.764, t(58) = -.301}).

\begin{figure}[!htp]
    \centering
    \includegraphics[scale=.3]{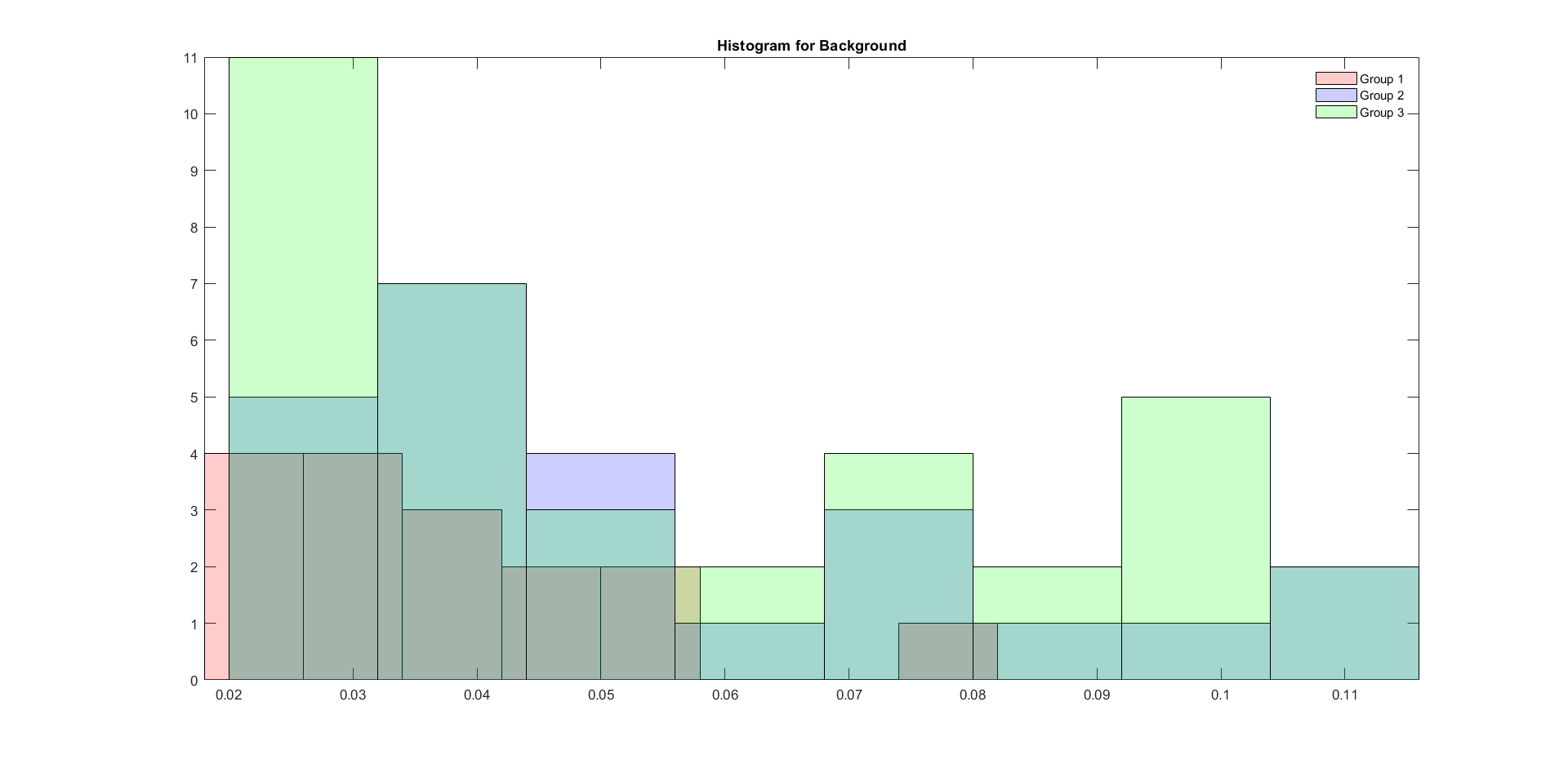}
    \caption{Histogram: Background by Groups}
    \label{fig:Background by Groups} \end{figure}

There were also statistically significant differences in how the groups valued the math subject GRE. Results show that the top 30 schools value the math subject GRE much higher than schools ranked 31-50 (\textit{p =.017, t(38) = 2.492}) and schools ranked over 51 (\textit{p =.024, t(50) = 2.324}). Group 1 valued the math subject GRE at 11.781\% \textit{(SD= .026)}, Group 2 at 9.6388 \% \textit{(SD= .096)}, and Group 3 at 9.8128 \% \textit{(SD= .029)}. There are no statistically significant differences between Group 2 and Group 3 in regards to the math subject GRE (\textit{p =.817, t(58) = -.233}).

\begin{figure}[!htp]
    \centering
    \includegraphics[scale=.3]{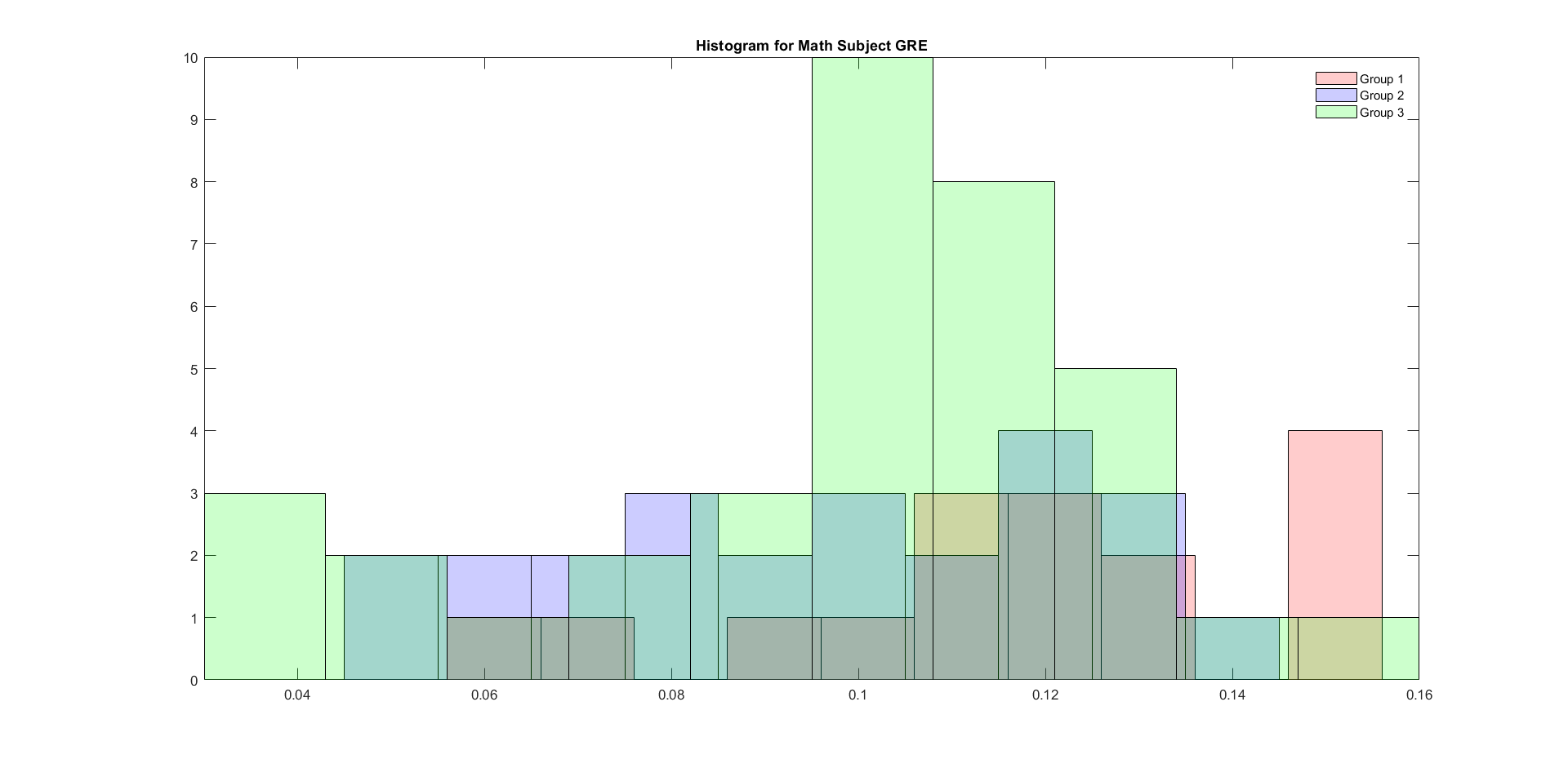}
    \caption{Histogram: Math Subject GRE by Groups}
    \label{fig:Math Subject GRE by Groups}
\end{figure}

Finally, the Groups also differed on how to rank an applicant's undergraduate school tier. Groups 1, 2, and 3 had a mean of 8.7418\% \textit{(SD =.029)}, 8.870 \% \textit{(SD =.026)}, and 7.182 \% \textit{(SD =.025)}, respectively. When testing for statistical significance, there is a difference between Groups 1 and 2 collectively vs. Group 3 (\textit{p =.007, t(74) = -2.768}). When testing individually, there were differences between Group 1 and Group 3 (\textit{p =.051, t(50) = 2.542}), Group 2 and Group 3 (\textit{p =.014, t(58) = 2.542}), but no differences between Group 1 and Group 2 (\textit{p =.884, t(38) = -.147}).

\begin{figure}[!htp]
    \centering
    \includegraphics[scale=.3]{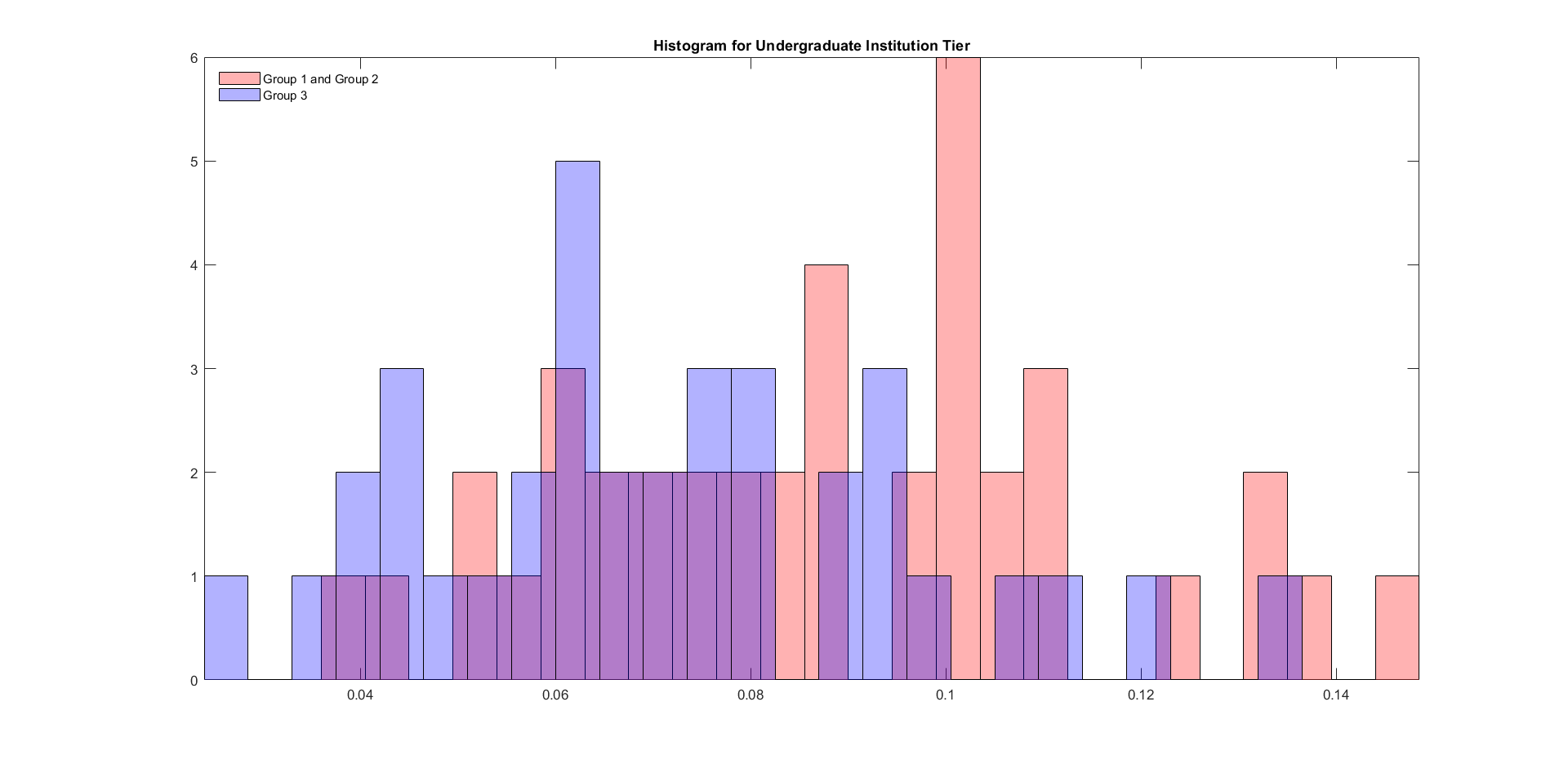}
    \caption{Histogram: Undergraduate School Tier GRE by Groups}
    \label{fig:Undergraduate School Tier GRE by Groups}
\end{figure}

\subsection{Correlations}

The following section outlines correlations between factors. All correlation coefficients can be found in Table \ref{tab:Correlation Coefficients} and the associated p-values in Table \ref{tab:p-values}. The correlations in this section will give insight into the relationship admission criteria have with each other. That is, the correlations in this section will answer the question "if a professor values admission factor $x$, how likely is he to value another admission factor $y$?" 
The following data again only contains responses from those who have served on mathematics PhD graduate admission committees.

\subsubsection{Background and Standardized Testing}

Background was found to be negatively associated with all three testing mediums: quantitative GRE scores (\textit{r(65) = -0.352, p =.003}), verbal GRE scores (\textit{r(65) = -0.254, p =.004}), and especially the math subject GRE scores (\textit{r(65) = -0.522, p =5.8E-06}). That is, professors who rank background higher tend to rank standardized testing methods lower, and professors who rank background lower tend to rank standardized testing methods higher. However, when testing between the three GRE scores internally, there exists moderate correlations between the three GRE scores. Quantiative GRE scores have a moderate correlation of (\textit{r(65) = 0.441, p =.0002}) with the verbal scores of the GRE and a moderate correlation (\textit{r(65) = 0.445, p =.0002}) with the subject GRE. However, the same can not be said between the verbal and the quantitative GRE (\textit{r(65) = 0.198, p =.379}).

\begin{figure}[!htp]
    \centering
    \includegraphics[scale=.28]{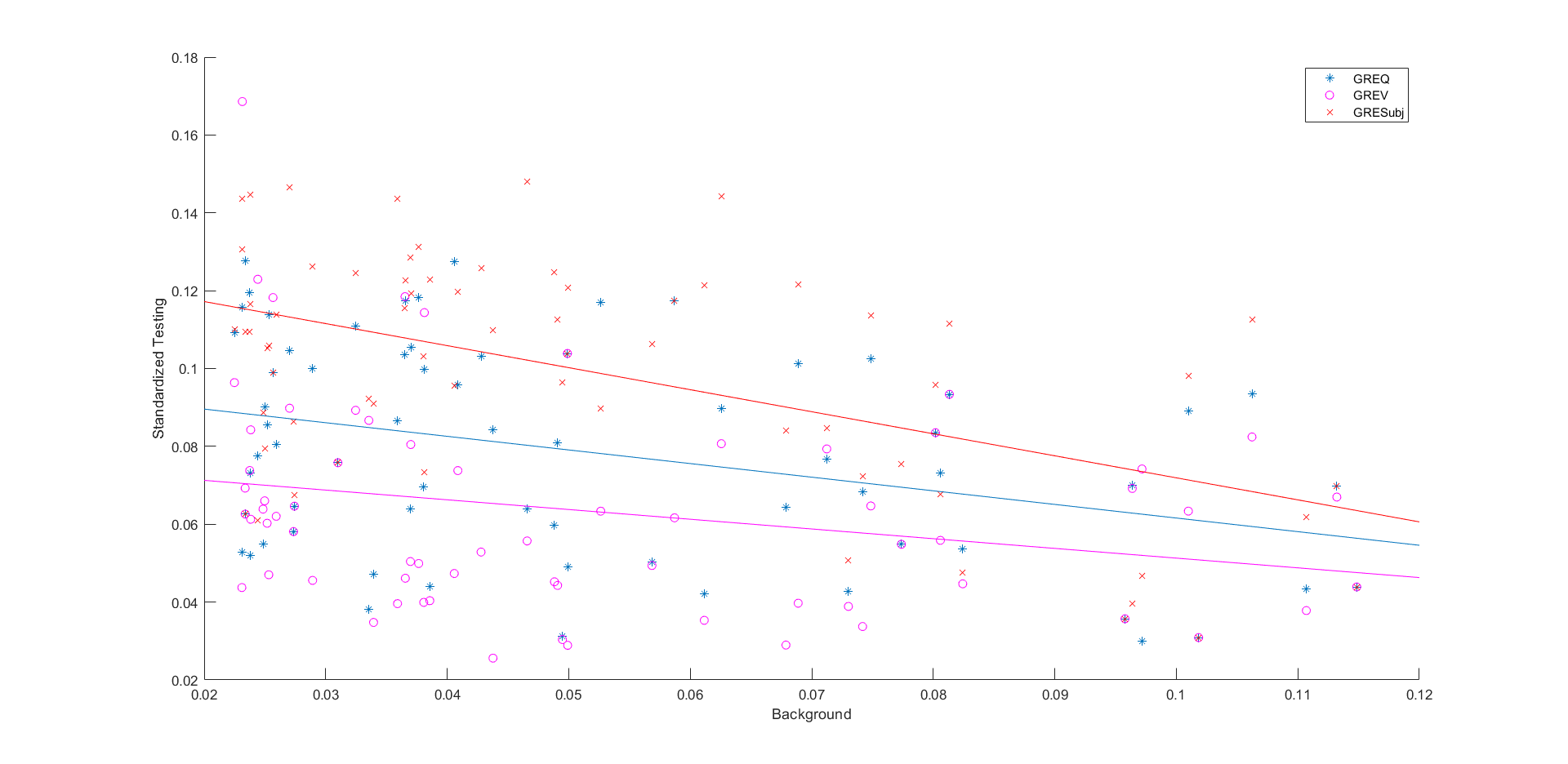}
    \caption{Correlation: Background and Standardized Testing}
    \label{fig:Correlation: Background and Standardized Testing Scatter}
\end{figure}

\afterpage{
\begin{table}[p]
    \centering
    \scalebox{0.7}
    {
    \begin{tabular}{l|c c c c c c c c c c c c c}
        &Back&Major&CGPA&MGPA&Research&Interview&UDM&LDM&GREQ&GREV&GR S&Tier\\
        \hline
        Back&1&0.135&0.011&0.005&0.042&0.207&0.125&-0.262&-0.352&-0.254&-0.522&-0.167\\
        Major&0.135&1&0.161&-0.034&-0.147&-0.02&-0.015&-0.391&-0.337&-0.224&-0.122&-0.102\\
        CGPA&0.011&0.161&1&0.234&-0.068&-0.113&-0.106&0.005&-0.281&-0.19&-0.301&-0.034\\
        MGPA&0.005&-0.034&0.234&1&-0.181&-0.454&0.451&0.304&-0.214&-0.39&-0.061&-0.225\\
        Research&0.042&-0.147&-0.068&-0.181&1&0.206&-0.077&-0.124&-0.372&-0.312&-0.179&-0.009\\
        Interview&0.207&-0.02&-0.113&-0.454&0.206&1&-0.407&-0.221&-0.195&0.006&-0.283&-0.103\\
        UDM&0.125&-0.015&-0.106&0.451&-0.077&-0.407&1&0.328&-0.145&-0.412&-0.142&-0.246\\
        LDM&-0.262&-0.391&0.005&0.304&-0.124&-0.221&0.328&1&0.015&-0.048&-0.028&-0.232\\
        GREQ&-0.352&-0.337&-0.281&-0.214&-0.372&-0.195&-0.145&0.015&1&0.441&0.445&-0.051\\
        GREV&-0.254&-0.224&-0.19&-0.39&-0.312&0.006&-0.412&-0.048&0.441&1&0.109&0.151\\
        GRES&-0.522&-0.122&-0.301&-0.061&-0.179&-0.283&-0.142&-0.028&0.445&0.109&1&-0.039\\
        Tier&-0.167&-0.102&-0.034&-0.225&-0.009&-0.103&-0.246&-0.232&-0.051&0.151&-0.039&1\\
    \end{tabular}
    }
    \caption{Pearson's Correlation Coefficients}
    \label{tab:Correlation Coefficients}
\end{table}
\begin{table}[p]
    \centering
    \scalebox{0.7}
    {
    \begin{tabular}{l|c c c c c c c c c c c c c}
        &Back&Major&CGPA&MGPA&Research&Interview&UDM&LDM&GREQ&GREV&GRES&Tier\\
        \hline
        Back&1&0.277&0.93&0.968&0.737&0.092&0.313&0.032&0.003&0.038&0&0.177\\
Major&0.277&1&0.192&0.783&0.236&0.872&0.906&0.001&0.005&0.068&0.326&0.41\\
CGPA&0.93&0.192&1&0.057&0.584&0.364&0.392&0.969&0.021&0.124&0.013&0.785\\
MGPA&0.968&0.783&0.057&1&0.143&0&0&0.012&0.082&0.001&0.627&0.067\\
Research&0.737&0.236&0.584&0.143&1&0.095&0.537&0.317&0.002&0.01&0.147&0.942\\
Interview&0.092&0.872&0.364&0&0.095&1&0.001&0.072&0.114&0.96&0.02&0.408\\
UDM&0.313&0.906&0.392&0&0.537&0.001&1&0.007&0.242&0.001&0.252&0.045\\
LDM&0.032&0.001&0.969&0.012&0.317&0.072&0.007&1&0.901&0.703&0.82&0.059\\
GREQ&0.003&0.005&0.021&0.082&0.002&0.114&0.242&0.901&1&0&0&0.68\\
GREV&0.038&0.068&0.124&0.001&0.01&0.96&0.001&0.703&0&1&0.379&0.224\\
GRES&0&0.326&0.013&0.627&0.147&0.02&0.252&0.82&0&0.379&1&0.755\\
Tier&0.177&0.41&0.785&0.067&0.942&0.408&0.045&0.059&0.68&0.224&0.755&1\\
    \end{tabular}
    }
    \caption{P-values from Pearson's Correlation Test}
    \label{tab:p-values}
\end{table}
\clearpage
}


\subsubsection{GPA}

Results indicate that the weighting of an applicant's cumulative GPA is negatively correlated with the weighting of the quantitative GRE scores (\textit{r(65) = -0.281, p =.02}) and math subject GRE scores (\textit{r(65) = -0.301, p =.01}), but shares no relationship with the verbal GRE scores (\textit{r(65) = -0.190, p =.124}). Whereas cumulative GPA is negatively correlated with the quantitative GRE and math subject GRE but shares no relationship with the verbal GRE, the opposite is true for the math GPA; there exists no correlation between math GPA and the quantitative GRE (\textit{r(65) = -0.214, p =.082}) or math subject GRE (\textit{r(65) = -0.06, p =.627}), but there is a moderate negative correlation with the verbal GRE(\textit{r(65) = -0.390, p =.001}). Math GPA also shares a moderate positive correlation between upper division and lower division math course grades (\textit{r(65) = -0.451, p =.0001} and \textit{r(65) = -0.304, p =.01} respectively).


\begin{figure}[!htp]
    \centering
    \includegraphics[scale=.3]{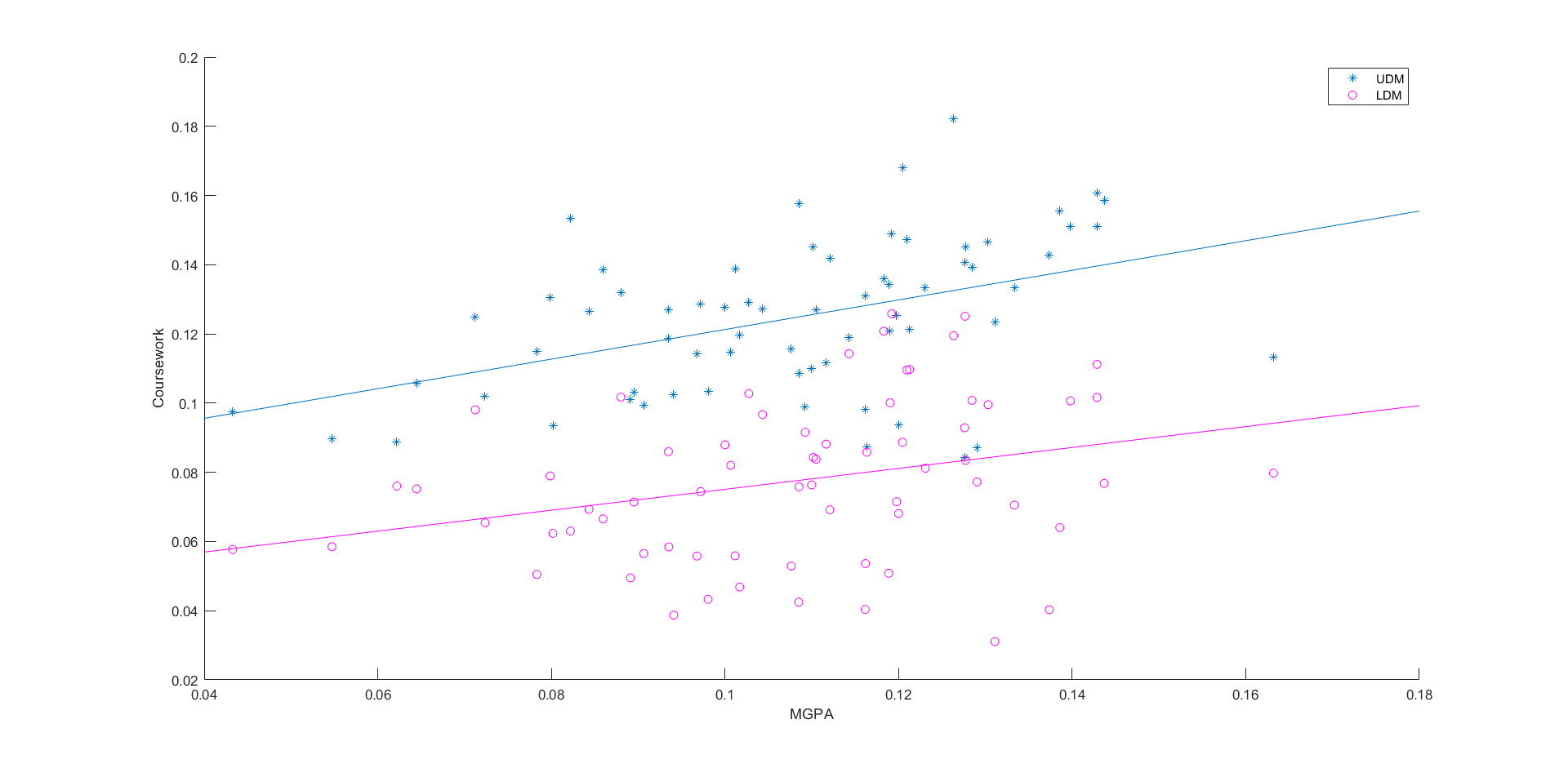}
    \caption{Correlation: Math GPA}
    \label{fig:Correlation: Math GPA}
\end{figure}

\subsubsection{Math Coursework}

Whereas upper division courses were found to be negatively correlated with the verbal GRE (\textit{r(65) = -0.412, p =.0005}), lower division courses were found to be negatively associated with both background (\textit{r(65) = -0.2619, p =.032}) and major (\textit{r(65) = -0.391, p =.001}). Upper division and lower division courses share a moderate positive correlation (\textit{r(65) = .328, p =.007}) with each other, meaning professors who tend to value upper division courses also tend to value lower division courses.

\begin{figure}[!htp]
    \centering
    \includegraphics[scale=.3]{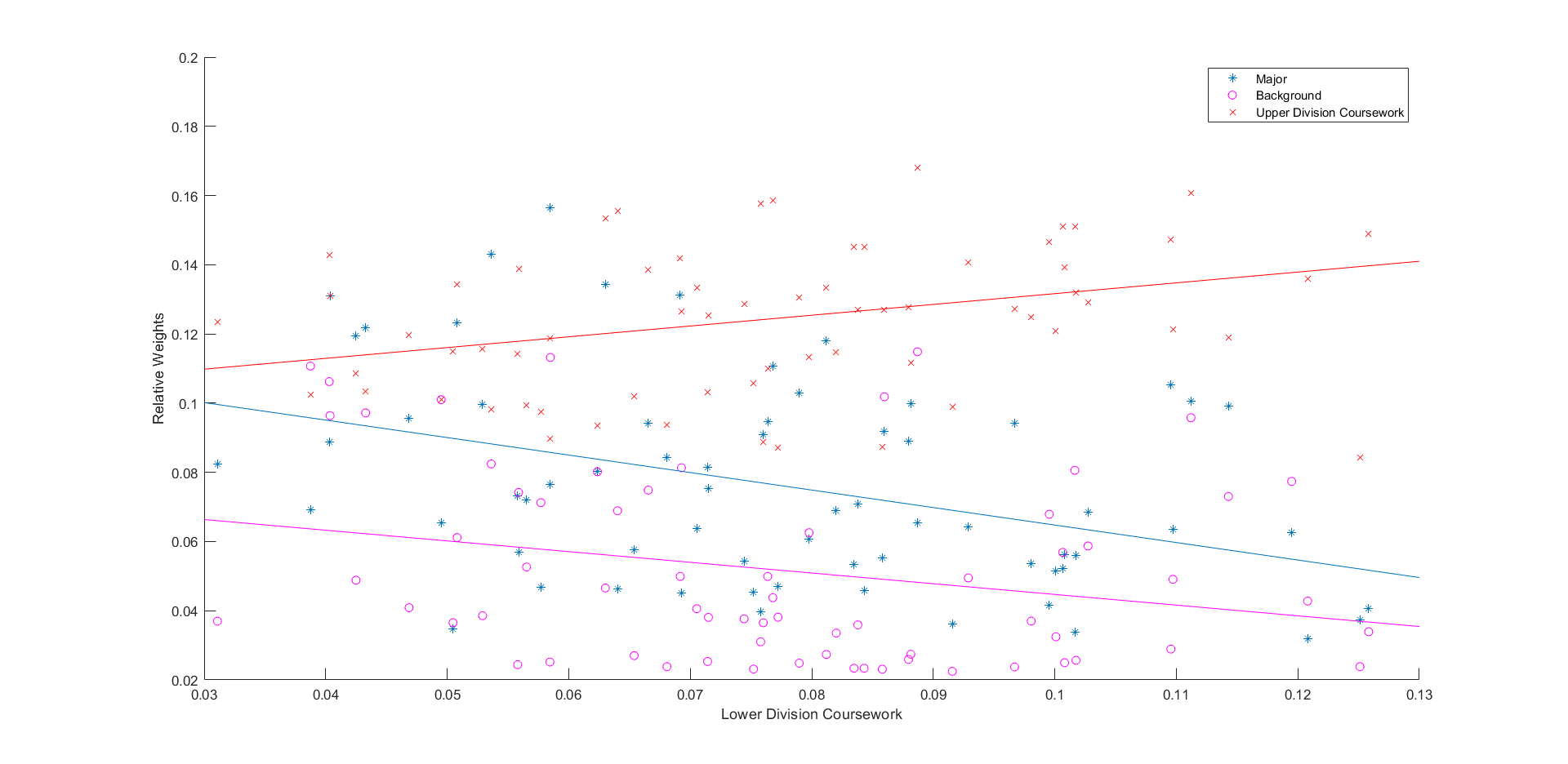}
    \caption{Correlation: Lower Division Coursework}
    \label{fig:Correlation: Lower Division Coursework}
\end{figure}




\subsubsection{Undergraduate School Tier}

Results indicate a positive correlation between an applicant's undergraduate school tier and upper division math grades (\textit{r(65) = -.256, p =.045}). That is, professors serving on graduate admission committees who place high value on an applicant's undergraduate school tier tend to place less emphasis on one's upper division math courses. Additionally, results also indicate an applicant's undergraduate school tier is also negatively correlated with  one's math GPA (\textit{r(65) = -.225, p =.067}) and and lower division math grades (\textit{r(65) = -.232, p =.059}). Again, although the last two correlations are not statistically significant at a $.05$ significance level, the p-value is nonetheless presented and left to the reader's discretion. 

\begin{figure}[!htp]
    \centering
    \includegraphics[scale=.3]{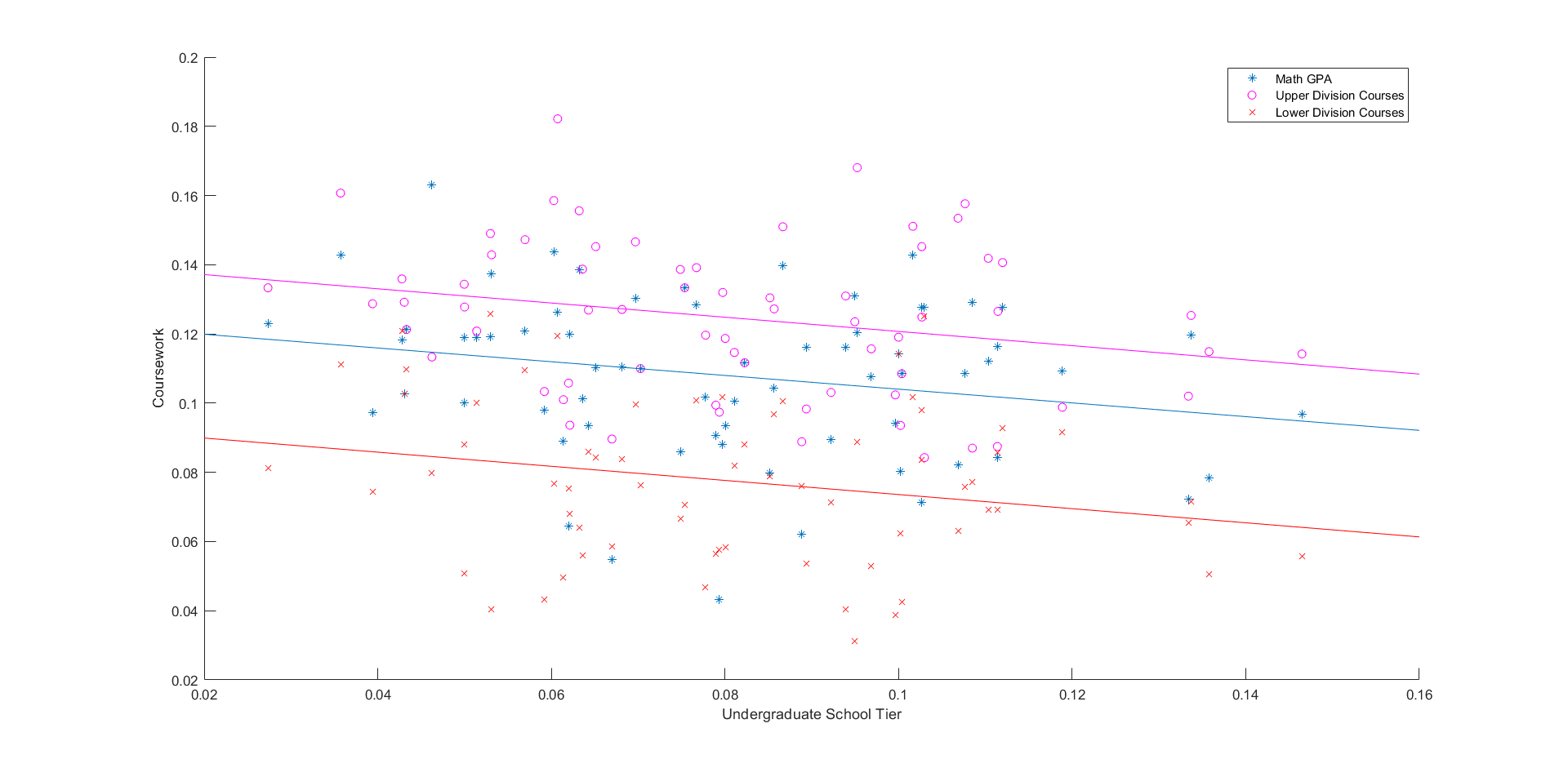}
    \caption{Correlation: Undergraduate School Tier}
    \label{fig:Correlation: Undergraduate School Tier}
\end{figure}

\section{Discussion}
The following section will provide insight on the results presented above. Additional commentary from professors who were interviewed will be provided. All professors quoted have served on graduate admission committees unless otherwise noted. The section is organized by our discussion of each variable. Boxplots are provided for visualizations with the red line indicating the mean and the upper and lower blue lines indicating the 75\% and 25\% percentiles respectively.

\subsection{Background}

At the beginning of the study, full professors were hypothesized to value background less than associate and assistant professors. However, as findings indicate there is no statistically significant differences between this population, it can be inferred that there is a greater consensus amongst professors on inclusive excellence than originally thought. As a result, this lack of evidence indicates that diversity does not vary based on professorship rank.

\begin{figure}[!htp]
    \centering
    \includegraphics[scale=.3]{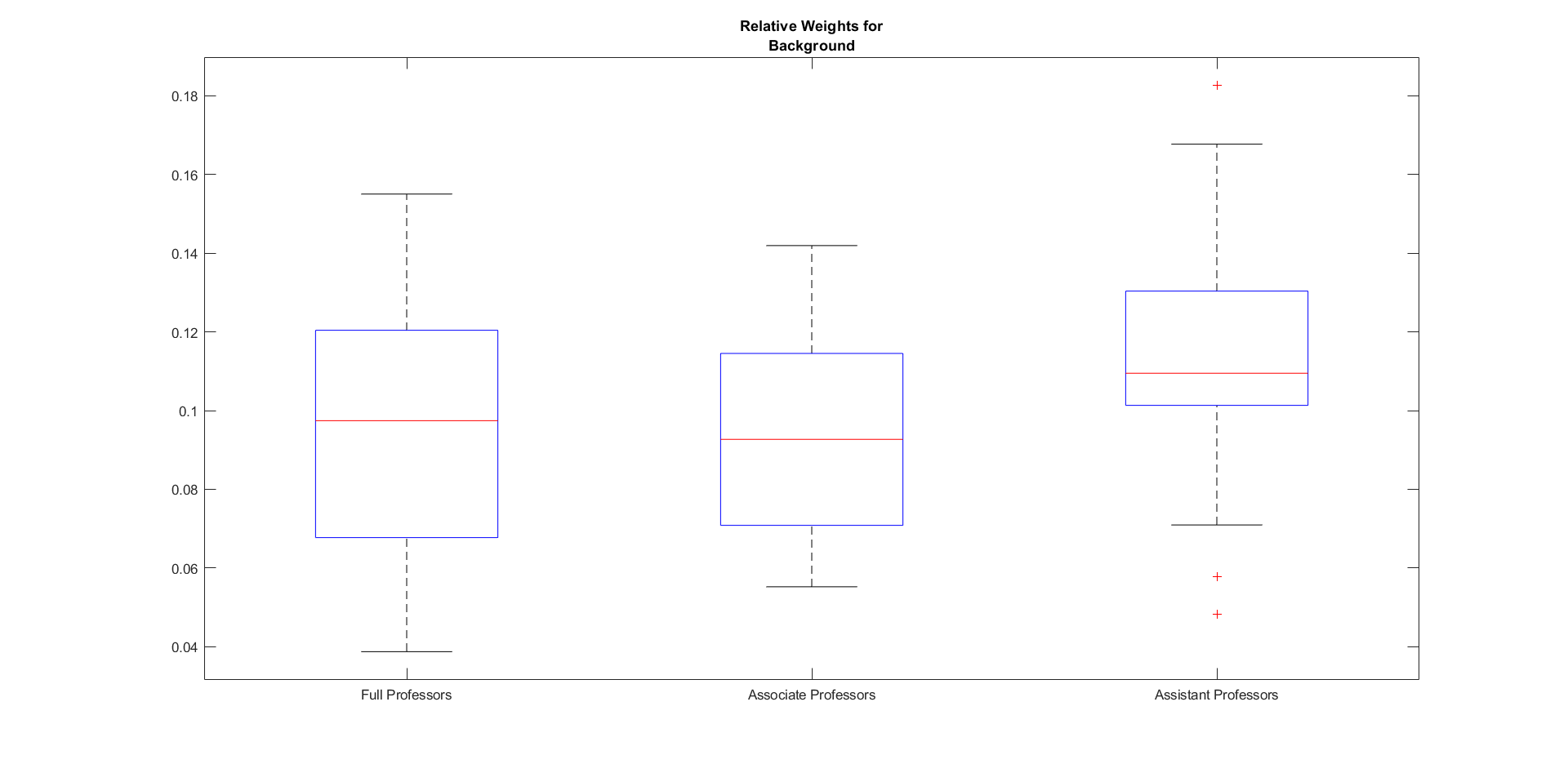}
    \caption{Boxplot: Background by Professorship Rank}
    \label{fig:Correlation: Background by Professorship Boxplot}
\end{figure}

However, a statistically significant difference does present itself when testing between school tiers. Rather than differing by professorship rank, it was shown that emphasis placed on an applicant's background actually differs by the ranking of the mathematics PhD program. It was shown that the top 30 schools value an applicant's background statistically less than schools outside the top 30. This not only implies that schools in the top 30 value other admission factors significantly more than background but also that minorities may have a more difficult time being admitted into a top 30 mathematics PhD program. Such findings present the issue of inclusive excellence in the top 30 mathematics PhD programs and brings into question how these schools are committed to promoting inclusive excellence. 

When asked, Associate Professor Nathan Kaplan of the University of California, Irvine responds that he makes an attempt "to be understanding of the challenges faced by and underrepresented minority applicants and to look for ways that we can use graduate admissions to make the department more inclusive."
Likewise, Assistant Professor Leslie New from Washington State University states that background provides an additional indicator of success and perseverance, with such attributes contributing to one's academic and research potential.

\begin{figure}[!htp]
    \centering
    \includegraphics[scale=.3]{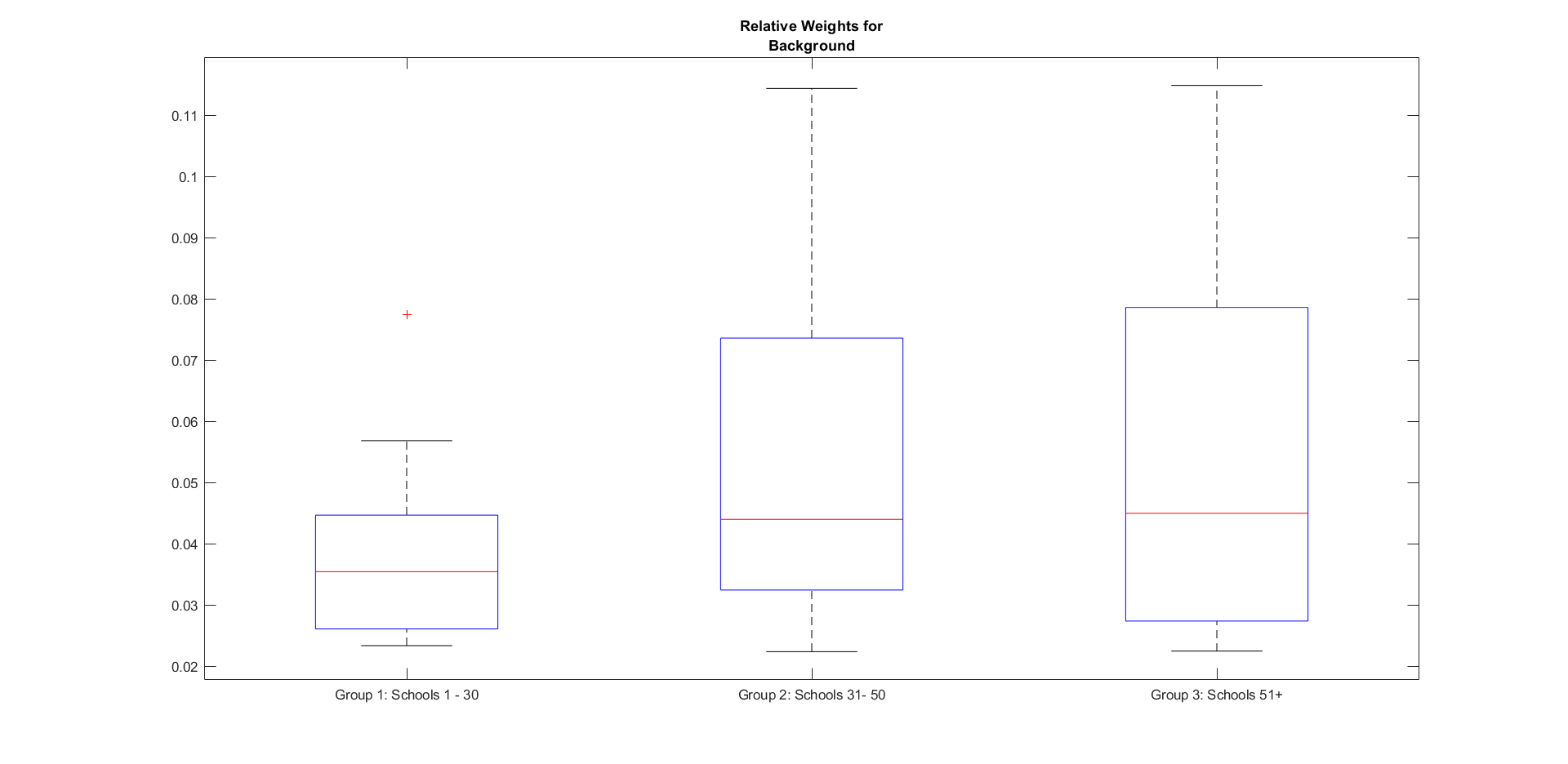}
    \caption{Boxplot: Correlation: Background by Groups}
    \label{fig:Correlation: Background by Groups Boxplot}
\end{figure}


The phenomenon that background is valued less at the top 30 mathematics PhD programs is not surprising as these programs are often receiving much more applications than they can accept. Graduate admission committees at these programs, then, have to rely on more traditional measures of academic excellence such as GPA and standardized test scores. As a result, selecting the best applicants by traditional measurements often overshadows diversity corrections in these top programs. 


\subsection{Standardized Testing}

\subsubsection{Background}

The topic of standardized testing and discrimination has long been a heated issue. Whereas some professors value standardized testing for its consistency and reliability, other professors consider them as an institutionalized barrier to higher education. Professor New goes so far as to say that "Standardized test metrics should be dropped completely and play no role in graduate admissions." Likewise, full professors Dieter Armbruster, associate director for graduate programs Arizona State University states that the GRE is "definitely biased against minorities" and "not indicative of math skills." 

However, some professors interviewed commented that the math subject GRE actually provides a more level playing field for applicants. Although he still recognizes the equity issues with standardized testing as a whole, Assistant Professor Alex Cloninger of the University of California, San Diego comments that the material on the mathematics subject exam is mostly covered in a standard mathematics curriculum, allowing students to have access to the exam material in their instruction. For such reasons, the subject exam allows less leeway for inequality issues.

The dichotomy between the two ideologies was clearly expressed in the findings. The negative correlation between background and standardized testing demonstrates that professors who ranked background higher tend to rank standardized testing lower, whereas professors who ranked background lower tended to place more weight on standardized testing. Additionally, all three GRE scores were positively correlated with each other. That is, they were in a sense a one deal package. This means that if a professor valued one aspect of the GRE score, he is also more likely to value the other two GRE scores, whereas a professor who disregards one aspect of the GRE is also most likely to disregard the other two aspects. The only exception to this is the relationship between the verbal GRE and the math subject GRE, in which there is no statistically significant relationship. A visual representation that exemplifies these relationships can be found in Figure \ref{fig:Background and Standardized Testing Correlation Plot}. With these statistics, we conclude that professors who value one's background tend to devalue all three metrics of the GRE whereas professors who value one's background less tend to value all three metrics of the GRE more.

\begin{figure}[!htp]
    \centering
    \includegraphics[scale=.3]{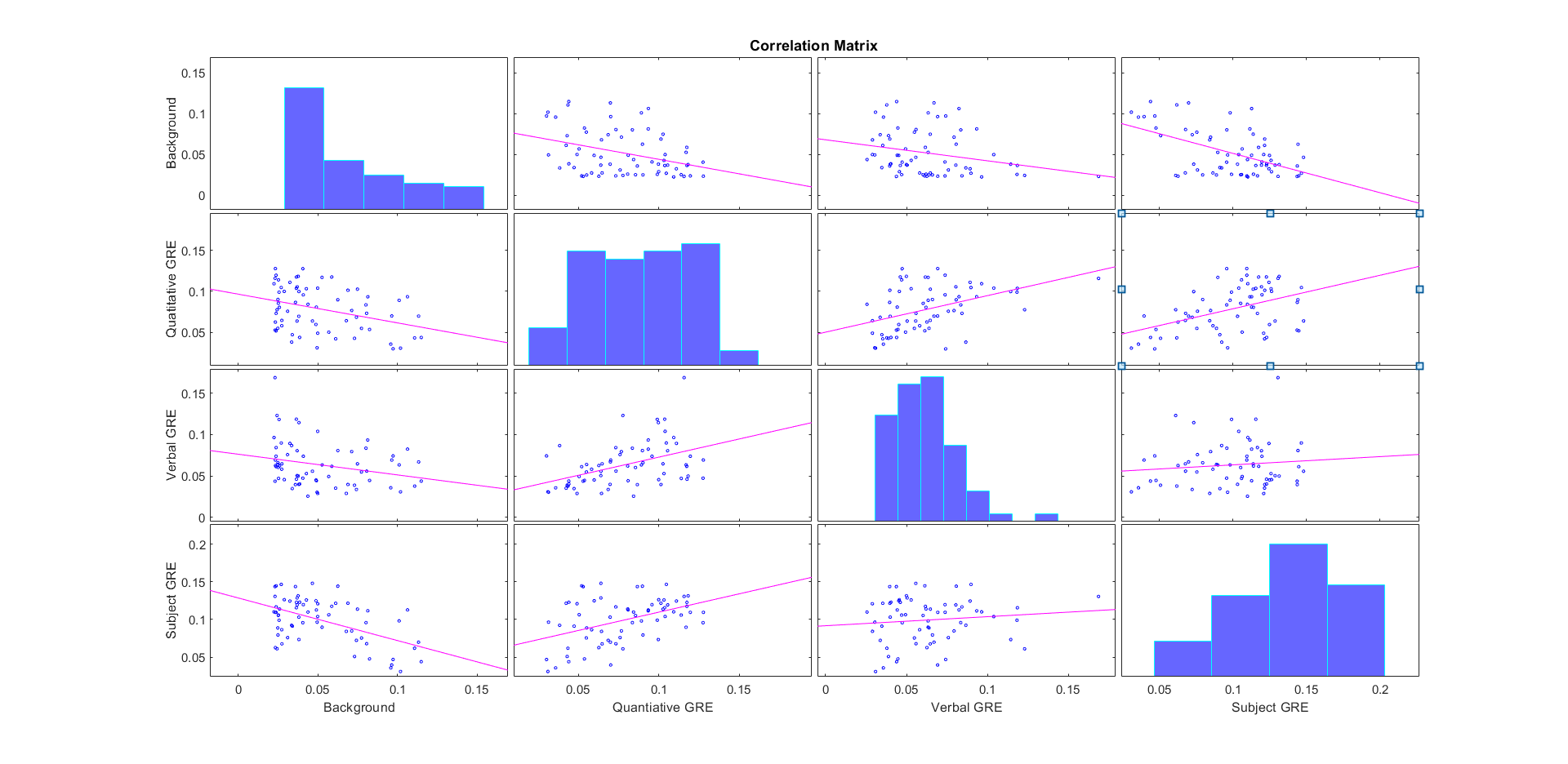}
    \caption{Background and Standardized Testing Correlation Plot}
    \label{fig:Background and Standardized Testing Correlation Plot}
\end{figure}

\subsubsection{Mathematics Subject GRE}
The analysis indicates that the math subject GRE is valued much more for programs in the top 30 rankings whereas there are no differences between programs outside of the top 30. This indicates that applicants applying to the top 30 math PhD programs should give more time to the subject GRE than those who are applying to schools outside the top 30. However, applicants only considering programs outside the top 30 can dedicate their time on other factors as their math subject GRE score will be given a lower weight. Additionally, applicants who are debating between applying to schools both in the top 30 and outside the top 30 should be aware that their subject GRE score will make a larger difference in their application to the top 30 but not much of a difference between schools outside the top 30.

\begin{figure}[!htp]
    \centering
    \includegraphics[scale=.3]{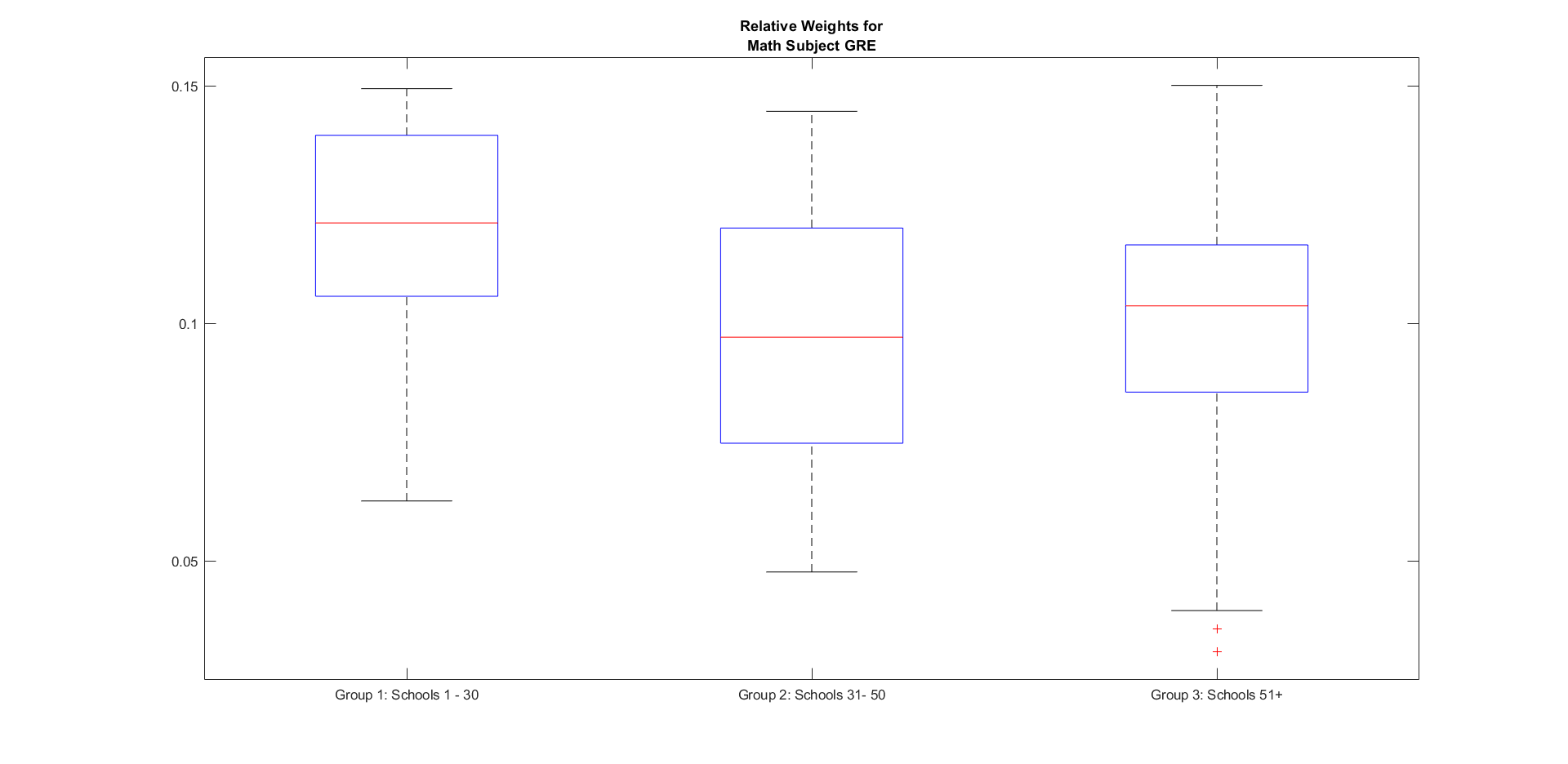}
    \caption{Boxplot: Mathematics Subject GRE by Groups}
    \label{fig:Mathematics Subject GRE by Groups}
\end{figure}

\subsubsection{Quantitative GRE}

Full Professor Elizabeth Meckes at Case Western Reserve University comments that the verbal GRE score was once valued slightly higher 30 - 40 years ago as graduate admission committees had less data to evaluate a candidate on. Advanced mathematics courses were not as available for undergraduates, the GRE subject exam was only offered later, and undergraduate research was not as prevalent. With less signals to evaluate a candidate on, graduate admission committees had to place more weight on other factors to differentiate candidates. As a result, standardized test scores were given more weight due to the lack of other signals.

Older full professors, then, may still carry on this emphasis. One anonymous professor describes the continuing emphasis on the quantitative GRE as similar to an academic "inertia." Anecdotally, he perceived that the test scores were definitely a "bigger deal than it was today." Full professors, then, who placed much emphasis on standardized test scores years ago may carry on this emphasis that younger associate and assistant professors do not. In the words of Professor Meckes, "Once professors set their standards, they carry on that standard and may not reevaluate them." Such reasoning can also be applied to the math subject GRE score in which there is a small but not statistically significant difference.

\begin{figure}[!htp]
    \centering
    \includegraphics[scale=.3]{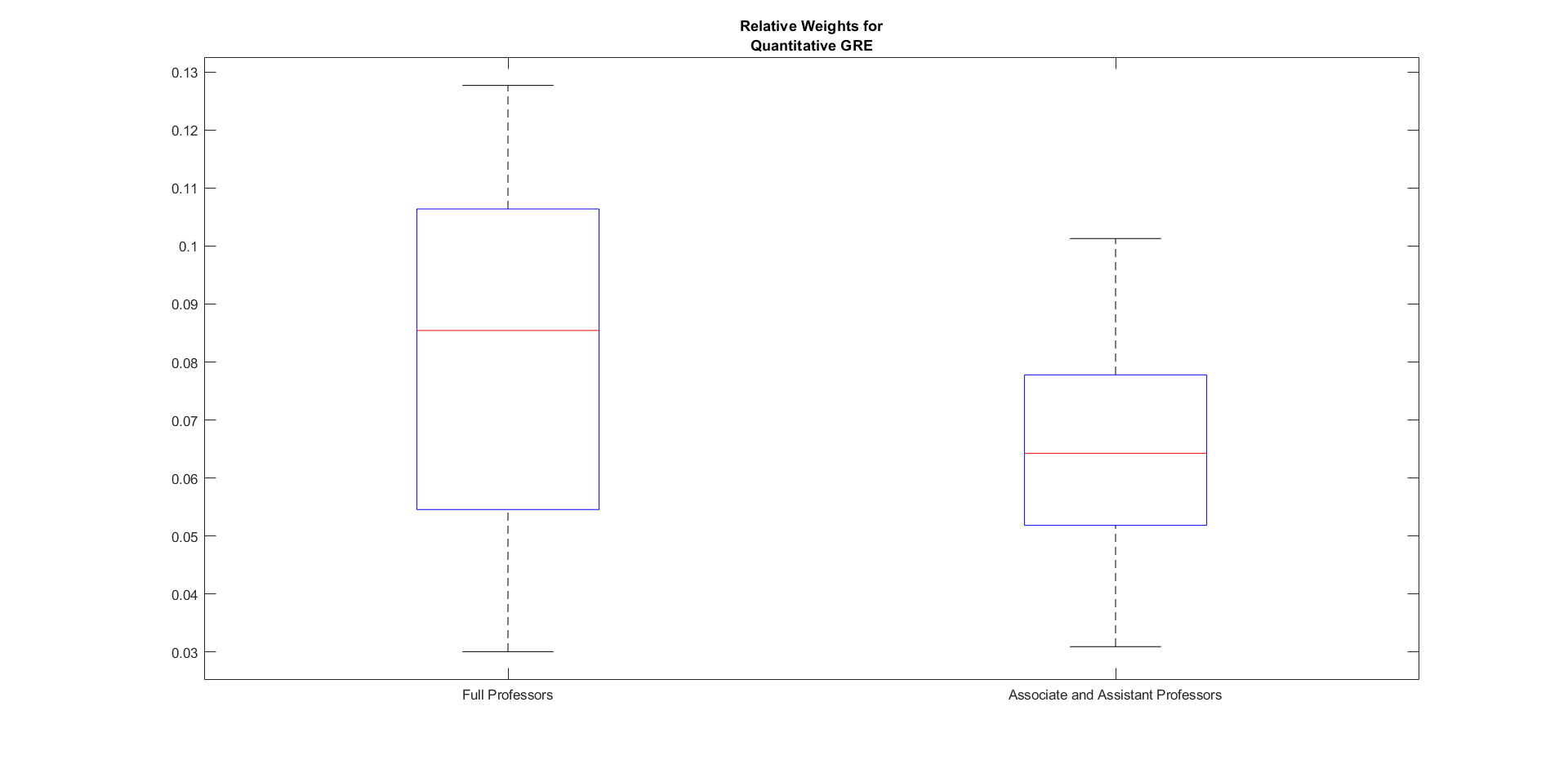}
    \caption{Boxplot: Quantitative GRE by Professorship Rank}
    \label{fig:Quantitative GRE by Professor}
\end{figure}

\begin{figure}[!htp]
    \centering
    \includegraphics[scale=.3]{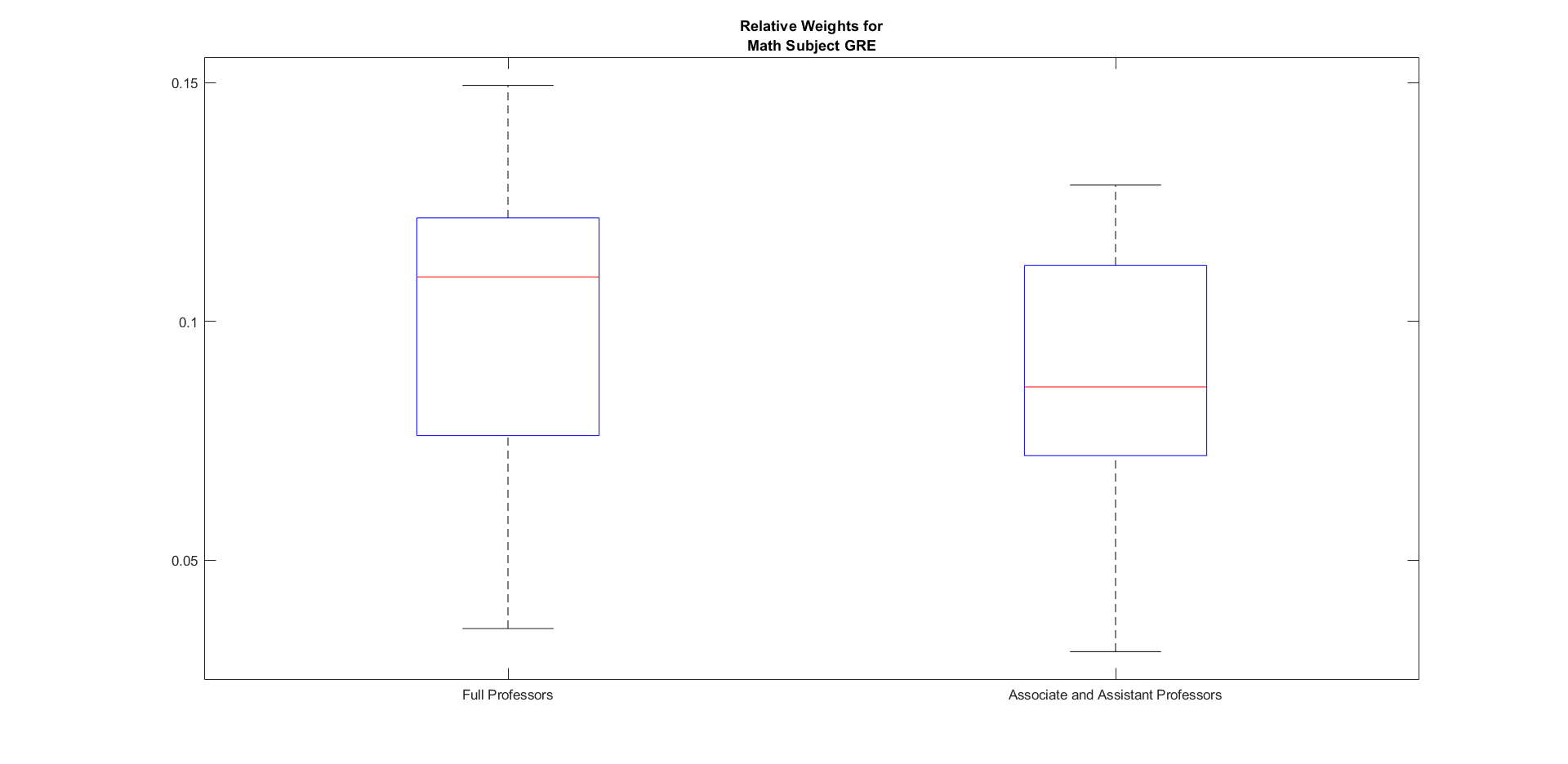}
    \caption{Boxplot: Mathematics Subject GRE by Professorship Rank}
    \label{fig:Mathematics Subject GRE by Professor}
\end{figure}

\subsubsection{The Role of the Verbal GRE Score}


As more signals for academics and research potential became available, the verbal GRE has become less relevant when evaluating a candidate. It is not surprising, then, that the verbal GRE was found to be negatively correlated with almost every other variable. However, this is opposed to the findings of Timmy Ma and Karen E. Wood \citep{Ma}. As described in the literature review, Ma and Wood find that the verbal score is positively correlated with graduating from a mathematics PhD program, even moreso a stronger predictor as there is little variation in the quantitative GRE scores of PhD candidates. The opposition of our findings with Ma and Wood's suggests that graduate admission committees are unaware of the usefulness of the verbal GRE scores in differentiating candidates. As the quantitative GRE is not able to differentiate candidates due to its little variation in scores, the verbal score may provide an additional way of evaluating candidates. 

Despite these findings, it is clear from our data that the verbal GRE is not valued during an admission's process as shown by its lowest ranking placement in Table \ref{tab:Served on Graduate Committee and All Factors}. Although applicants seeking to optimize their admission chances should not focus on the verbal GRE scores due to its low rankings, graduate admissions committee may find it worthwhile to look further into the phenomenon of why positive verbal GRE scores are correlated with successful PhD candidates.
\subsection{Undergraduate School Tier}

Results indicate that the top 50 schools give more weight to an applicant's undergraduate institution. This is due to the fact that professors who are familiar with an applicant's undergraduate institution will have a better understanding of the math curriculum at that institution, allowing them to better judge the applicant. Conversely, if graduate admission members don't recognize an applicant's school, they may not be able to have a good understanding of how rigorous the math curriculum is at that institution. Professor Meckes' comments perhaps clarifies the value of tier best: "This isn't just snobbery. It's another data point that provides context."

\begin{figure}[!htp]
    \centering
    \includegraphics[scale=.3]{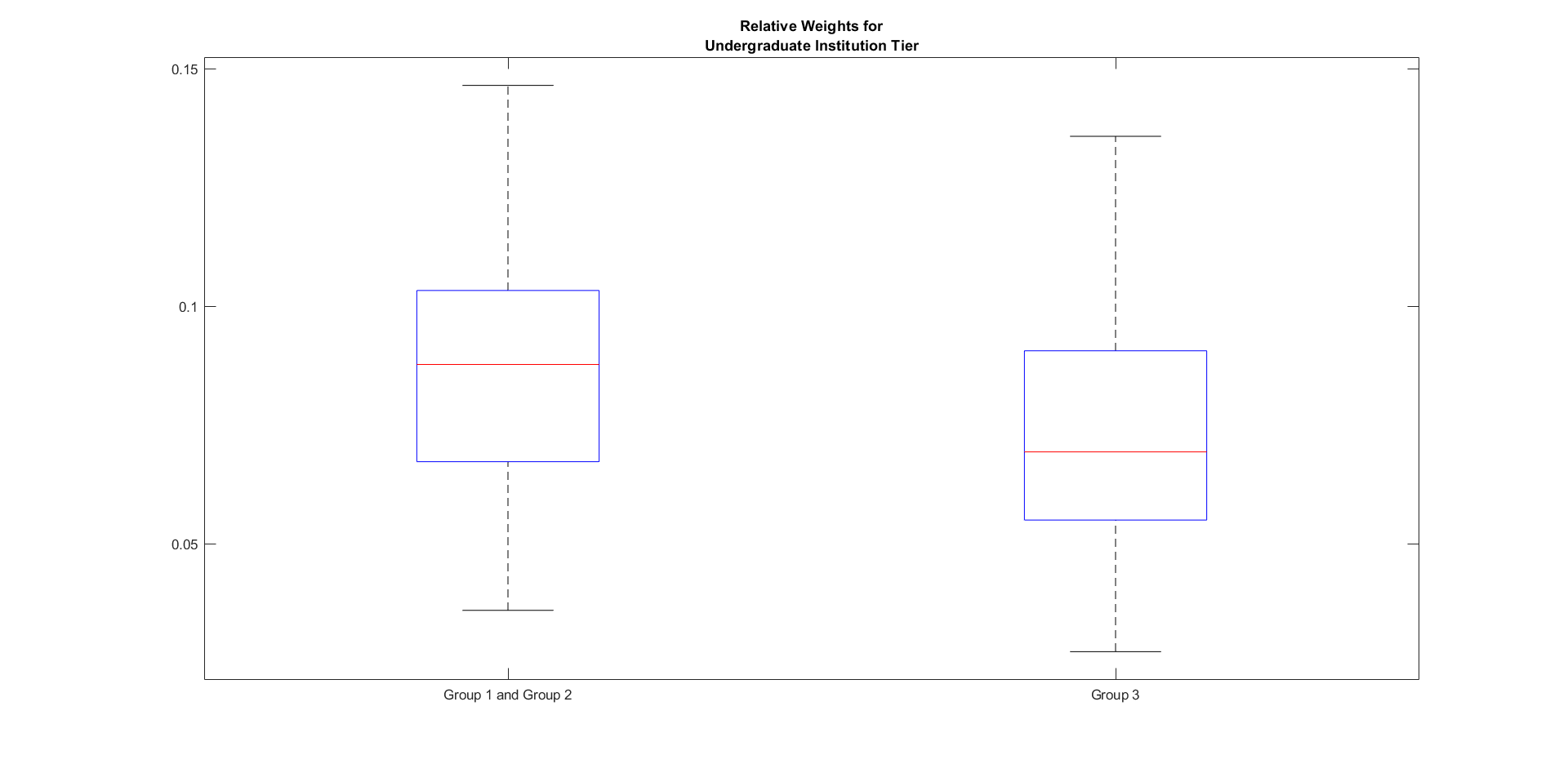}
    \caption{Boxplot: Undergraduate School Tier by Groups}
    \label{fig:Undergraduate School Tier by Groups}
\end{figure}

This may suggests a sort of "academic inequality" between school tiers. The fact that the top 50 schools emphasize an applicant's undergraduate school tier more implies that applicants from lower schools will have a difficult time climbing the "academic ladder." As a result, applicants from smaller schools will have to provide more evidence of mathematical competence by receiving higher grades in more rigorous courses to signal the same academic competence as their peers from more well-known institutions.

However, it is important to note that applicants from lower ranked schools have many opportunities to provide evidence of academic competence as graduate admission committees view one's application comprehensively. Professor Kaplan states that applicants are evaluated relative to their institution and that applicants who come from smaller colleges with limited programs but have taken the highest level courses offered are viewed very positively. Additionally, although he has not served on a graduate admissions committee, Visiting Assistant Professor Henry Tucker of the University of California, Riverside shares a similar opinion stating that "I would choose a student from an unknown school who took the hardest courses offered over a student from a highly-ranked school who only did the absolute minimum requirements to graduate." 

Tucker continues to state that although he acknowledges the obstacles standardized testing presents to minorities, it can also be an opportunity for applicants from smaller schools. Standardized testing, he says, actually provides an opportunity for students at smaller schools with less academic opportunities to demonstrate their mathematical proficiency and competence on the same level as students from larger schools. An anonymous professor who had previously served as vice-chair of graduate admissions at a tier one institution agrees with this sentiment stating that disadvantaged or minority students with high math subject GRE scores whose application may otherwise appear unassuming definitely stand out to his admissions committee. As a result, since the math subject GRE is a standardized and reliable method of evaluating all test takers, applicants from lesser-known schools should especially seek to do well on this exam.

The fact that the top 50 math PhD programs prefer applicants from higher ranked schools coincides with the negative relationship undergraduate school tier shares with math GPA, upper division math, and lower division math. This means that those who value academic coursework more value an applicant's undergraduate school tier less, whereas those who value coursework less tend to value the applicant's undergraduate school tier more. This is due to the fact that the rigor of one's coursework often varies by school. One anonymous lecturer at the University of California, Berkeley, comments that "Two courses from two different schools may have the same name and same design yet the level of difficulty may be completely different." This finding, then, reflects Ma and Wood's results that one's undergraduate tier is more associated with success in a mathematics PhD program than GPA due to higher rigor at top schools.

As a result, professors who review an applicant from a highly ranked school may be less concerned with lower grades due to the academic rigor associated with the school. Conversely, a professor who sees an applicant from an institution he doesn't recognize may not have a good understanding of the institution's academic rigor and therefore may not know how to weigh the applicant's coursework. As a result, the professor may be less lenient in viewing the application and expect higher grades in coursework to provide a stronger signal of academic proficiency. As PhD applicants from higher ranked schools are given more leniency on coursework due to the perceived higher rigor at more well-known institutions, this buttresses the idea that it may be difficult to climb the "academic ladder." 

In addition to doing well on standardized testing, PhD applicants can plan their coursework to present a comprehensive application by taking courses that relates to one's research interests. Although he has not served on a graduate admissions committee, Assistant Professor Daniel Conus of Lehigh University states that "I personally view very favorably students who already have a sense of what area of mathematics they would like to specialize in (as much as can be expected) and who contact specific faculty members to get an idea of what is available to them." For instance, an applicant who intends to study applied mathematics in a PhD program can create a comprehensive application by taking extensive coursework in numerical analysis, optimization, and PDEs. Achieving high scores in such classes will not only provide a positive signal of academic proficiency but also demonstrate a genuine research interest. Additionally, having research experience that complements one's coursework and research interest will not only create a comprehensive application but also provide a strong signal of both genuine research interest and research potential. Full Professor Sergey Lapin of Washington State University states that "It is very important to extend one's curiosity and knowledge past a classroom setting. Applicants should be asking deeper questions such as 'What's the point? Why do we learn this? Why and how does this work?' Students who don't ask these questions are not accepted into graduate programs." Having both coursework and related research, then, will not only provide a strong indicator of academic curiosity but also signal to graduate admission committees the seriousness of one's academic goals.

\subsection{Math Coursework}

It is not surprising that associate professors, assistant professor, visiting assistant professors, and lecturers value lower division courses more than their full professor counterparts. An obvious explanation is due to the fact that the former group is more likely to teach lower division courses than the latter. The same reasoning explains why there is an even larger gap between lecturers and full professors in regards to lower division courses, especially as lecturers often do not teach upper division courses.

\begin{figure}[!htp]
    \centering
    \includegraphics[scale=.3]{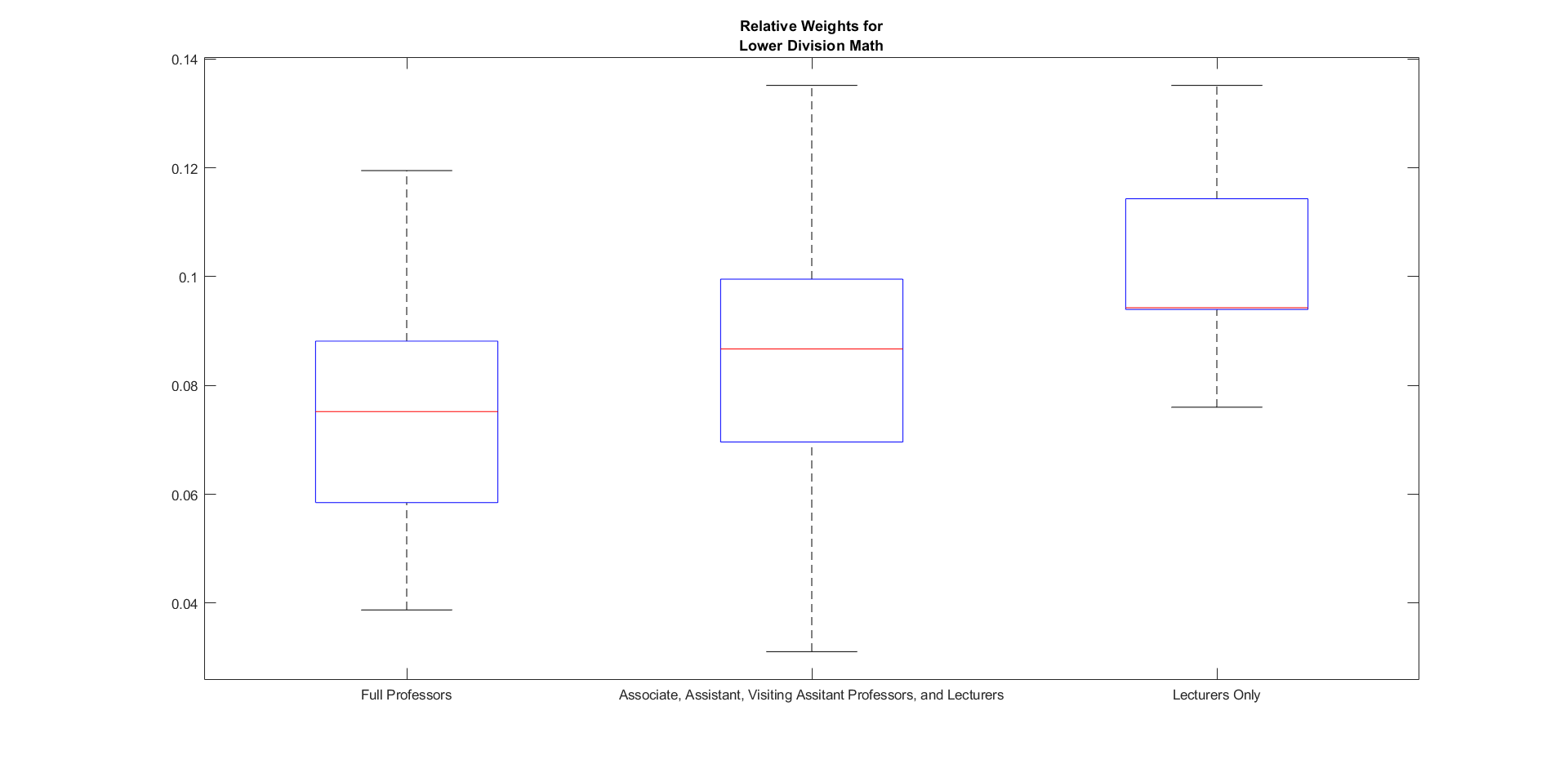}
    \caption{Boxplot: Lower Division Courses by Professorship Rank}
    \label{fig:Lower Division Courses by Professorship}
\end{figure}

However, it is surprising that lower division and upper division coursework share a positive relationship. Originally, it was expected that the two share a negative relationship. That is, professors who value upper division coursework were expected to not value lower division coursework as the former overshadows and supersedes the latter. Yet, upper division coursework was shown to be positively correlated with lower division coursework. This means that professors who valued upper division coursework highly also valued lower division coursework highly.

One explanation comes from the insight of Anton Gorodetski, Full Professor at the University of California, Irvine. He states that whereas lower division courses provide students with the basic mathematical tools to solve a standard set of problems, upper division courses provide the mathematical reasoning and theory on why and how these tools work. As a result, a student's understanding of the tools used at a lower division level is fulfilled with the theory learned at the upper division level. For instance, a first year calculus student's understanding of continuous functions is only complete when learning the standard $ \epsilon-\delta$ definition. As such, the positive correlation between upper and lower division courses reflects the deeper level of understanding professors expect students to have when transitioning from lower division to upper division coursework.

Lower division courses were also shown to be negatively correlated with a candidate's major. That is, professors who place more value on a candidate's major tend to value lower division courses less, whereas professors who place less value on a candidate's major tend to value lower division courses more. One reasoning for this negative correlation is that math PhD candidates who did not major in mathematics are expected to have the same mathematics skills as other candidates. As a result, professors who overlook an applicant's major outside of mathematics will place more emphasis on lower division course grades. However, candidates applying for a math PhD who were math majors have taken more rigorous mathematics courses, therefore supplying a stronger indicator of mathematical competence than lower division courses. As such, this finding suggests that those applying to a math PhD program from different fields pay additional attention to their grades in lower division math courses.

\subsection{Research Experience}

It may strike as surprising to find that research experience was valued by assistant professors more than full professors. However, when considering the recency of undergraduate mathematics research as described by Gallian \citep{Gallian}, the results become more evident. As support for undergraduate mathematics research didn't become common until the late 1980s, many older faculty members did not have the same undergraduate research opportunities as their younger associate and assistant colleagues did. As time progressed, research opportunities for undergraduates became more popular, allowing more generations of mathematicians to be involved in research. As a result, associate and assistant professors were more likely to have been involved in research as undergraduates than older full professors, and therefore value undergraduate research experience to a higher degree. The recent burgeoning of undergraduate research in mathematics, then, explains the differences shown in Figure \ref{fig:Undergraduate Research Experience by Professorship}.

\begin{figure}[!htp]
    \centering
    \includegraphics[scale=.3]{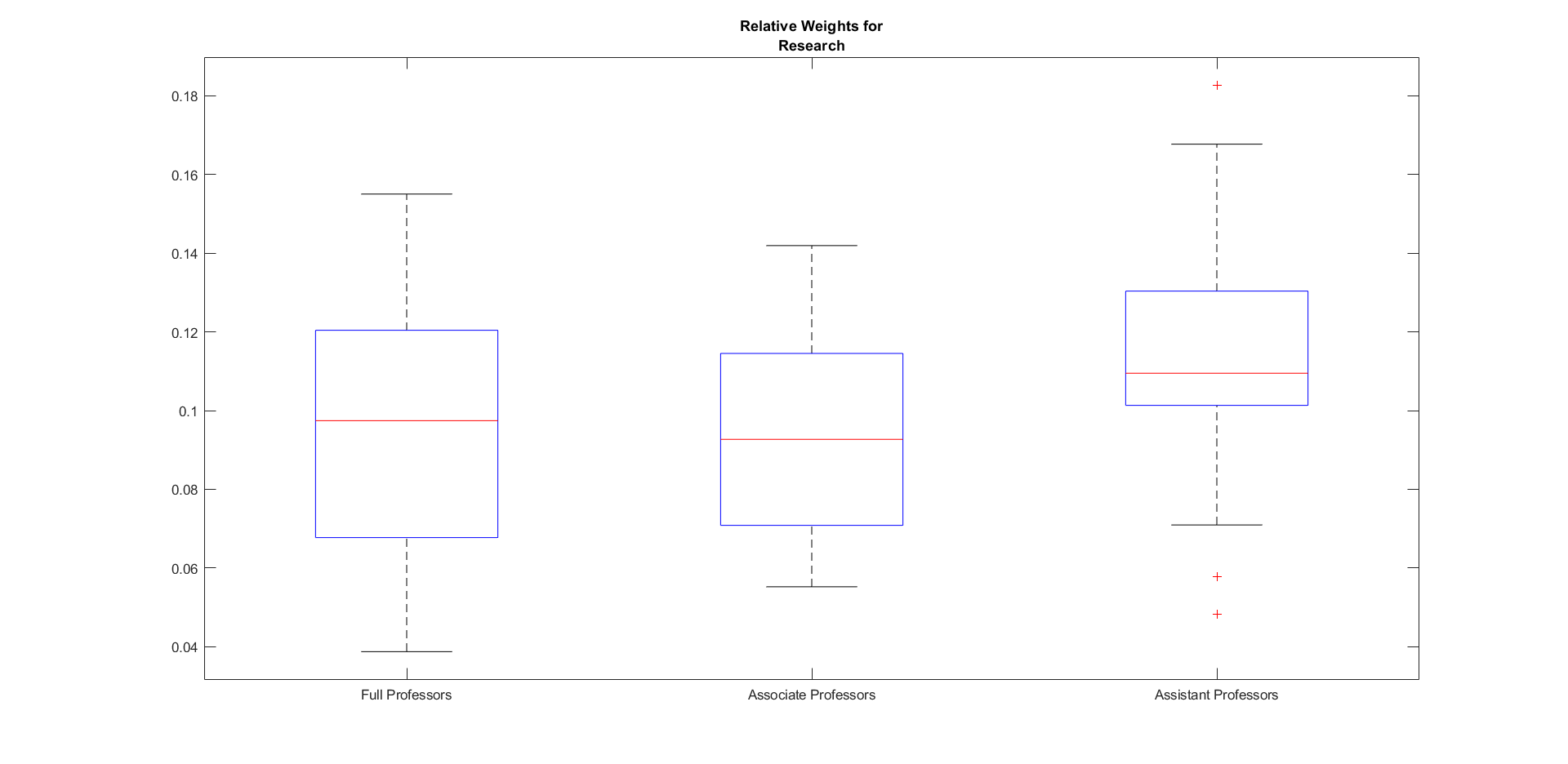}
    \caption{Boxplot: Undergraduate Research Experience by Professorship Rank}
    \label{fig:Undergraduate Research Experience by Professorship}
\end{figure}

Reflecting the same skepticism mathematicians had when undergraduate research in mathematics first began, Full Professor Stephen Bigelow of the University of California, Santa Barbara states that "'research experience' is more important than it should be because an undergraduate is not normally ready for research." Professor Meckes also describes that this common aggrandizement of undergraduate research results from the fact that research experience is both oversold by universities and overbought by students.  She expresses that the importance placed on undergraduate research is overemphasized when in reality such experience is not the "end-all be-all." This is especially true as more students are placing research experience on their CVs despite performing menial work. As such, having "research experience" has became an inflated term that has an ambiguous meaning. 

One anonymous professor who had served as vice-chair for graduate affairs at a tier one institution states that he was never sure how to interpret undergraduate research experience due to this ambiguity and that just participating in an REU is not going to make a big difference unless it's supported by an objective endorsement by a credible third party. In fact, several interviewed professors have stated that it is considered a major red flag if they see an applicant who participating in a research experience but did not receive a recommendation letter from their advisor - doing so being comparable to a PhD candidate not being recommended by his thesis advisor. Such skepticism, though not necessary as intense, resembles the original opposition to undergraduate math research when first established in the 1980s.

As such, the true value of research is not intrinsically in the research experience but rather in the letter of recommendation. This is due to the fact that since research experience is often an inflated term, admission committees need an objective means of viewing such experiences. Professor Meckes even states that she automatically disregards research experience if it doesn't come with a letter of recommendation to verify the nature of the research and the student's role in it. Professor Armbruster likewise states that the type of research program, whether it's a publication, REU, independent study, or honors thesis does not matter. What matters most to him is the insight given in the letter of recommendation. Letters from advisors and program directors should therefore address the nature of the research project and the student's role in the project. Professor Cloninger lists the following qualities that his admission committee would like to see described in a candidate's research experience: "initiative, independence, curiosity, perseverance, engagement, and problem solving." 

Our study's interviewees have additionally stated that the results of an applicant's research experience doesn't necessarily have to be publishable or revolutionary. In fact, one professor even comically described the chances of an undergraduate publishing novel work is "less than $\epsilon$." An anonymous professor states that this is because math research is comprehensive in nature and requires the full understanding of the past. However, Professor Lapin states that results should nonetheless be presentable. Official presentations at a conference allows graduate admission committees to know that the applicant was not only able to conduct research but also that the research was suitable to be presented publicly to others. Such presentations give weight to an applicant's research experience by demonstrating that the research was at the level of public presentation and therefore provides an additional objective signal of research potential.

As a result, if applicants are planning to engage in research, applicants must not only demonstrate the qualities described by Professor Cloninger but more importantly provide a letter of recommendation that affirms and vouches such qualities. Without a recommendation describing the nature of one's work, research experience may not be given any weight by graduate admission committees. It is conclusive, then, that the value of undergraduate research in regards to graduate admissions lies not necessarily in the research experience itself but more so in the advisor's recommendation letter.

\subsection{Limitations}

It is important to note that the variables above are not comprehensive. There are many other factors to consider in a candidate's application, most notably letters of recommendations. However, letters of recommendations were excluded as students do not have full control over this factor. As a result, the results from the AHP analysis will only present the relative significance of the variables given above.

This paper also acknowledges the potential of a Type I error in its analysis. This is due to the increase in family-wise error rates (FWER) as multiple correlation hypothesis tests between all 12 variables for a total of 144 correlation hypothesis tests were performed. However, the associated p-values have been provided for each test and the conclusions are left to the reader's discretion.

\section{Conclusion}
It is my hope that this paper's findings has shed some light on which criteria graduate admissions committees value most and that the subsequent discussion has explained the reasoning behind this ranking. Of course, this paper's findings does not comprehensively illuminate the entire process of graduate applications, but its analysis and subsequent discussion has perhaps shed enough light both to offer guidance to an undergraduate reader and to provide insight to professional academics serving on admissions committees. 
Upon finishing this article, the former will hopefully have realized where he should best dedicate his time preparing for graduate school applications, and the latter can likewise reassess their own admission criteria to either conform or disagree with the general criteria rankings presented here. 

Additionally, the results of this paper have provided not only a hierarchy of graduate admissions criteria but also a deeper insight into the values of the mathematics community. Philosopher Thomas Aquinas once wrote that "The things that we love tell us who we are." By looking at the admission criteria rankings, the reader is able to see what leading mathematicians "love," thereby gaining insight into the current priorities of mathematicians at the highest level of academia. In this sense, the ranking of graduate admissions criteria also reflects the values of the larger mathematics community.

Of course, many will argue for a change in these values. Prime examples of ever-changing criteria are the the use of standardized testing, the worth of research experience, and the approach to improving diversity. As new generations of mathematicians arise, the rankings may perhaps shift. Such was the case with the rising emphasis on undergraduate research experience and conversely with the decreasing relevance of the verbal GRE score. Which values the mathematics community wants to emphasize in the future, then, is perhaps unforeseeable. However, as the reader finishes this article, I ultimately leave it up to the them to decide what factors the future mathematics community deems important enough to value in graduate admissions.

\newpage


\bibliography{bibliography}

\begin{thebibliography}{}

\bibitem [\protect \citeauthoryear {%
{American Mathematical Society}%
}{%
{American Mathematical Society}%
}{%
{\protect \APACyear {1987}}%
}]{%
AMSNotice}
\APACinsertmetastar {%
AMSNotice}%
\begin{APACrefauthors}%
{American Mathematical Society}.%
\end{APACrefauthors}%
\unskip\
\newblock
\APACrefYearMonthDay{1987}{}{}.
\newblock
\APACrefbtitle {Notices of the American Mathematical Society.} {Notices of the
  american mathematical society.}
\newblock
\begin{APACrefURL}
  \url{\url{https://www.ams.org/journals/notices/198702/198702FullIssue.pdf}}
  \end{APACrefURL}
\PrintBackRefs{\CurrentBib}

\bibitem [\protect \citeauthoryear {%
{American Mathematical Society}%
}{%
{American Mathematical Society}%
}{%
{\protect \APACyear {2018}}%
}]{%
AMSAnnualReport}
\APACinsertmetastar {%
AMSAnnualReport}%
\begin{APACrefauthors}%
{American Mathematical Society}.%
\end{APACrefauthors}%
\unskip\
\newblock
\APACrefYearMonthDay{2018}{}{}.
\newblock
\APACrefbtitle {The Mathematical and Statistical Sciences Annual Survey.} {The
  mathematical and statistical sciences annual survey.}
\newblock
\begin{APACrefURL} \url{\url{http://www.ams.org/profession/data/annual-survey}}
  \end{APACrefURL}
\newblock
\APACrefnote{Accessed: 09.22.2020}
\PrintBackRefs{\CurrentBib}

\bibitem [\protect \citeauthoryear {%
{Ed Barbeau}%
}{%
{Ed Barbeau}%
}{%
{\protect \APACyear {1986}}%
}]{%
Barbeau}
\APACinsertmetastar {%
Barbeau}%
\begin{APACrefauthors}%
{Ed Barbeau}.%
\end{APACrefauthors}%
\unskip\
\newblock
\APACrefYearMonthDay{1986}{}{}.
\newblock
{\BBOQ}\APACrefatitle {Perron's Result and a Decision on Admissions Tests}
  {Perron's result and a decision on admissions tests}.{\BBCQ}
\newblock
\APACjournalVolNumPages{Mathematics Magazine}{59}{1}{12--22}.
\newblock
\begin{APACrefURL} \url{http://www.jstor.org/stable/2690012} \end{APACrefURL}
\PrintBackRefs{\CurrentBib}

\bibitem [\protect \citeauthoryear {%
Gallian%
}{%
Gallian%
}{%
{\protect \APACyear {2015}}%
}]{%
Gallian}
\APACinsertmetastar {%
Gallian}%
\begin{APACrefauthors}%
Gallian, J\BPBI A.%
\end{APACrefauthors}%
\unskip\
\newblock
\APACrefYearMonthDay{2015}{}{}.
\newblock
{\BBOQ}\APACrefatitle {A History of Undergraduate Research in Mathematics} {A
  history of undergraduate research in mathematics}.{\BBCQ}
\newblock
\APACjournalVolNumPages{A Century of Advancing Mathematics}{}{}{263-273}.
\PrintBackRefs{\CurrentBib}

\bibitem [\protect \citeauthoryear {%
Ma%
\ \protect \BOthers {.}}{%
Ma%
\ \protect \BOthers {.}}{%
{\protect \APACyear {2018}}%
}]{%
Ma}
\APACinsertmetastar {%
Ma}%
\begin{APACrefauthors}%
Ma, T.%
, Wood, K\BPBI E.%
, Xu, D.%
, Guidotti, P.%
, Pantano, A.%
\BCBL {}\ \BBA {} Komarova, N\BPBI L.%
\end{APACrefauthors}%
\unskip\
\newblock
\APACrefYearMonthDay{2018}{}{}.
\newblock
{\BBOQ}\APACrefatitle {Admission predictors for success in a mathematics
  graduate program} {Admission predictors for success in a mathematics graduate
  program}.{\BBCQ}
\newblock
\APACjournalVolNumPages{arXiv: History and Overview}{}{}{}.
\PrintBackRefs{\CurrentBib}

\bibitem [\protect \citeauthoryear {%
{Mathematical Association of America}%
}{%
{Mathematical Association of America}%
}{%
{\protect \APACyear {1986}}%
}]{%
MAAFocus}
\APACinsertmetastar {%
MAAFocus}%
\begin{APACrefauthors}%
{Mathematical Association of America}.%
\end{APACrefauthors}%
\unskip\
\newblock
\APACrefYearMonthDay{1986}{}{}.
\newblock
\APACrefbtitle {FOCUS.} {Focus.}
\newblock
\begin{APACrefURL}
  \url{\url{https://www.maa.org/sites/default/files/pdf/pubs/focus/past\_issues/FOCUS\_6\_1.pdf}}
  \end{APACrefURL}
\PrintBackRefs{\CurrentBib}

\bibitem [\protect \citeauthoryear {%
{National Science Foundation}%
}{%
{National Science Foundation}%
}{%
{\protect \APACyear {2019}}%
}]{%
NSFMinorityData}
\APACinsertmetastar {%
NSFMinorityData}%
\begin{APACrefauthors}%
{National Science Foundation}.%
\end{APACrefauthors}%
\unskip\
\newblock
\APACrefYearMonthDay{2019}{}{}.
\newblock
\APACrefbtitle {Women, Minorities, and Persons with Disabilities in Science and
  Engineering: 2019. Special Report NSF 19-304.} {Women, minorities, and
  persons with disabilities in science and engineering: 2019. special report
  nsf 19-304.}
\newblock
\begin{APACrefURL} \url{\url{https://www.nsf.gov/statistics/wmpd}}
  \end{APACrefURL}
\newblock
\APACrefnote{Accessed: 09.22.2020}
\PrintBackRefs{\CurrentBib}

\bibitem [\protect \citeauthoryear {%
T.~Saaty%
}{%
T.~Saaty%
}{%
{\protect \APACyear {2005}}%
}]{%
TheoryandApplications}
\APACinsertmetastar {%
TheoryandApplications}%
\begin{APACrefauthors}%
Saaty, T.%
\end{APACrefauthors}%
\unskip\
\newblock
\APACrefYear{2005}.
\newblock
\APACrefbtitle {Theory and Applications of the Analytic Network Process:
  Decision Making With Benefits, Opportunities, Costs, and Risks} {Theory and
  applications of the analytic network process: Decision making with benefits,
  opportunities, costs, and risks}.
\newblock
\APACaddressPublisher{}{RWS Publications}.
\newblock
\begin{APACrefURL} \url{https://books.google.com/books?id=65N6FiNBMjEC}
  \end{APACrefURL}
\PrintBackRefs{\CurrentBib}

\bibitem [\protect \citeauthoryear {%
T\BPBI L.~Saaty%
}{%
T\BPBI L.~Saaty%
}{%
{\protect \APACyear {1994}}%
}]{%
Saaty}
\APACinsertmetastar {%
Saaty}%
\begin{APACrefauthors}%
Saaty, T\BPBI L.%
\end{APACrefauthors}%
\unskip\
\newblock
\APACrefYearMonthDay{1994}{}{}.
\newblock
{\BBOQ}\APACrefatitle {How to Make a Decision: The Analytic Hierarchy Process}
  {How to make a decision: The analytic hierarchy process}.{\BBCQ}
\newblock
\APACjournalVolNumPages{INFORMS Journal on Applied Analytics}{24}{6}{19-43}.
\newblock
\begin{APACrefURL} \url{https://doi.org/10.1287/inte.24.6.19} \end{APACrefURL}
\newblock
\begin{APACrefDOI} \doi{10.1287/inte.24.6.19} \end{APACrefDOI}
\PrintBackRefs{\CurrentBib}

\bibitem [\protect \citeauthoryear {%
T\BPBI L.~Saaty%
\ \BBA {} Tran%
}{%
T\BPBI L.~Saaty%
\ \BBA {} Tran%
}{%
{\protect \APACyear {2007}}%
}]{%
Saaty_Tran}
\APACinsertmetastar {%
Saaty_Tran}%
\begin{APACrefauthors}%
Saaty, T\BPBI L.%
\BCBT {}\ \BBA {} Tran, L\BPBI T.%
\end{APACrefauthors}%
\unskip\
\newblock
\APACrefYearMonthDay{2007}{}{}.
\newblock
{\BBOQ}\APACrefatitle {On the invalidity of fuzzifying numerical judgments in
  the Analytic Hierarchy Process} {On the invalidity of fuzzifying numerical
  judgments in the analytic hierarchy process}.{\BBCQ}
\newblock
\APACjournalVolNumPages{Mathematical and Computer Modelling}{46}{7}{962 - 975}.
\newblock
\begin{APACrefURL}
  \url{http://www.sciencedirect.com/science/article/pii/S0895717707000787}
  \end{APACrefURL}
\newblock
\APACrefnote{Decision Making with the Analytic Hierarchy Process and the
  Analytic Network Process}
\newblock
\begin{APACrefDOI} \doi{https://doi.org/10.1016/j.mcm.2007.03.022}
  \end{APACrefDOI}
\PrintBackRefs{\CurrentBib}

\bibitem [\protect \citeauthoryear {%
{U.S. News \& World Report}%
}{%
{U.S. News \& World Report}%
}{%
{\protect \APACyear {2020}}%
}]{%
USWorldNewsRanking}
\APACinsertmetastar {%
USWorldNewsRanking}%
\begin{APACrefauthors}%
{U.S. News \& World Report}.%
\end{APACrefauthors}%
\unskip\
\newblock
\APACrefYearMonthDay{2020}{}{}.
\newblock
\APACrefbtitle {Best Mathematics Programs.} {Best mathematics programs.}
\newblock
\begin{APACrefURL}
  \url{\url{https://www.usnews.com/best-graduate-schools/top-science-schools/mathematics-rankings}}
  \end{APACrefURL}
\PrintBackRefs{\CurrentBib}

\end{thebibliography}
\newpage

\section{Appendix}

\subsection{School Rankings}\
\begin{table}[!htp]
\setlength\tabcolsep{1.5pt}
\def\arraystretch{.6}
    \centering
    \begin{tabular}{l|m{3cm}|m{3cm}|c}
    Institution & US World News Ranking & Group Categorization & Responses\\
    \hline
    Arizona State University&47&2&2\\
    Auburn University&94&3&1\\
    Binghamton University&86&3&1\\
    Bowling Green State University&N/A&3&1\\
    Brandeis University&47&2&1\\
    Brigham Young University&86&3&4\\
    Bryn Mawr College&N/A&3&1\\
    Case Western Reserve University&94&3&2\\
    Central Michigan University&N/A&3&1\\
    Colorado State University&74&3&1\\
    George Washington University&101&3&2\\
    Georgia Institute of Technology&26&1&1\\
    Indiana University, Bloomington&34&2&2\\
    Lehigh University&101&3&2\\
    New Mexico State University&117&3&1\\
    Portland State University&N/A&3&1\\
    University of California, Berkeley&2&1&6\\
    University of California, Davis&34&2&2\\
    University of California, Irvine&39&2&8\\
    University of California, Los Angeles&7&1&1\\
    University of California, Merced&N/A&3&1\\
    University of California, Riverside&71&3&5\\
    University of California, San Diego&19&1&4\\
    University of California, Santa Barbara&39&2&2\\
    University of California, Santa Cruz&71&3&2\\
    University of Arizona&47&2&6\\
    University of Nevada, Reno&N/A&3&1\\
    University of Oregon&55&3&3\\
    University of Pittsburgh&55&3&1\\
    University of Texas, Arlington&136&3&1\\
    University of Virginia&47&2&1\\
    University of Washington&26&1&4\\
    Washington State University&101&3&4\\
    Blank &N/A&N/A&32\\
    \hline
    Sum & N/A & N/A & 108
    \end{tabular}
    \caption{School Categorization}
    \label{tab:School Categorization}
\end{table}

\subsection{Methodology Scale}
\begin{table}
    \centering
    \begin{tabular}{c|l}
        Score & Meaning\\
        \hline
        1/3 & Strongly less important\\
        1/2 & Moderately less important\\
        1 & Similarly as important as\\
        2 & Moderately more important than\\
        3 & Strongly more important than
    \end{tabular}
    \caption{Study Scale}
    \label{tab:StudyScaling}
\end{table}

\subsection{Random Index table}
\begin{table}[!htpb]
    \centering
    \scalebox{0.85}
    {
    \begin{tabular}{c|c|c|c|c|c|c|c|c|c|c|c|c|c|c|c}
         Order&1& 2& 3& 4 &5 &6& 7& 8 &9 &10& 11 &12 &13 &14& 15\\
         \hline
         RI &0& 0& 0.52& 0.89 &1.11 &1.25& 1.35 &1.40& 1.45& 1.49 &1.52 &1.54 &1.56 &1.58& 1.59
    \end{tabular}
    }
    \caption{Random Index (RI) Table}
    \label{tab:RandomIndexTable}
\end{table}
\subsection{Derivations}
 The following section will give a derivation of Saaty's consistency ratio. When the right hand side of matrix \ref{Matrix1} is multiplied by $\Vec{w}=\begin{bmatrix} w_1&w_2&\hdots&w_n \end{bmatrix}^T$, we get 
\begin{equation*}\label{Matrix2}
    M\Vec{w}=
    \begin{bmatrix}
    \frac{w_1}{w_1}&\frac{w_1}{w_2}&\hdots&\frac{w_1}{w_n}\\
    \frac{w_2}{w_1}&\frac{w_2}{w_2}&\hdots&\frac{w_2}{w_n}\\
    \vdots&\vdots&\ddots&\vdots\\
    \frac{w_n}{w_1}&\frac{w_n}{w_2}&\hdots&\frac{w_n}{w_n}\\
\end{bmatrix}
    \begin{bmatrix}
        w_1\\w_2\\\vdots\\w_n 
    \end{bmatrix}
    =
    \begin{bmatrix}
        \sum_{i=1}^n w_1\\
        \sum_{i=1}^n w_2\\
        \vdots\\
        \sum_{i=1}^n w_n\\
    \end{bmatrix}
    =
    \begin{bmatrix}
        nw_1\\
        nw_2\\
        \vdots\\
        nw_n\\
    \end{bmatrix}
    =n \Vec{w}
\end{equation*}

By definition, if $n$ is an eigenvalue of matrix $M$, then $\Vec{w}$ is an eigenvector of $M$.

\begin{theorem}
    Each row of matrix $M\Vec{w}$ is a constant multiple of a given row
\end{theorem}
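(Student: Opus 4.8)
The plan is to read the claim off directly from the definition of the comparison matrix in matrix \ref{Matrix1} and then to harvest the eigenvalue consequence that motivates the Consistency Index. I read the statement as asserting that each row of $M$ (rather than of the product vector $M\vec{w}$, whose ``rows'' are merely the scalars $nw_i$) is a constant multiple of one fixed reference row. First I would recall that, entrywise, $M_{ij} = w_i/w_j$, so that the $i$-th row of $M$ is
\begin{equation*}
    \left( \frac{w_i}{w_1}, \frac{w_i}{w_2}, \ldots, \frac{w_i}{w_n} \right) = w_i \left( \frac{1}{w_1}, \frac{1}{w_2}, \ldots, \frac{1}{w_n} \right).
\end{equation*}
Hence every row is a scalar multiple of the single fixed vector $\vec{r} = (1/w_1, \ldots, 1/w_n)$; taking the ``given row'' to be, say, the first row $w_1 \vec{r}$, the $i$-th row equals $(w_i/w_1)$ times it, which is exactly the assertion. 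Since every $w_i$ is positive, these multipliers are well defined and nonzero, and the choice of reference row does not matter.

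The remaining steps, in order, are bookkeeping. From the proportionality of all rows I would conclude $\operatorname{rank}(M) = 1$, so $0$ is an eigenvalue of $M$ with multiplicity at least $n-1$. The trace is $\sum_i M_{ii} = \sum_i (w_i/w_i) = n$, and since the trace is the sum of the eigenvalues, the one remaining eigenvalue must equal $n$. Combined with the computation just above the theorem, namely $M\vec{w} = n\vec{w}$, this pins down $\lambda_{\max} = n$ for a perfectly consistent matrix and shows all its other eigenvalues vanish. That is precisely what the Consistency Index needs: for a consistent matrix $CI = (\lambda_{\max} - n)/(n-1) = 0$, so that a nonzero value of $CI$ genuinely measures departure from consistency.

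I do not expect a real obstacle: the argument is a one-line factorization followed by a trace count. The only points deserving a moment's care are (i) interpreting the statement correctly, so that the ``given row'' is understood as an arbitrary fixed row and the object in question is $M$ itself; and (ii) invoking positivity of the weights, which is what guarantees that the rank is exactly $1$ (not $0$) and that every proportionality constant is a genuine nonzero scalar, so that the passage to ``$0$ with multiplicity $n-1$, and $n$ once'' is valid.
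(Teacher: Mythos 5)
Your proof is correct and takes essentially the same route as the paper's: both arguments show that row $i$ of $M$ equals the constant $M_{i1}=w_i/w_1$ times row $1$, the paper deriving this from the consistency identity $M_{ij}=M_{i1}M_{1j}$ and you from the explicit factorization $w_i/w_j = w_i\cdot(1/w_j)$, which coincide for the matrix at hand. Your reading of the (misstated) theorem as a claim about the rows of $M$ itself rather than of the vector $M\vec{w}$ matches the paper's evident intent, and your rank-one/trace bookkeeping simply reproduces, with a cleaner justification, the remarks the paper appends immediately after its proof.
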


\begin{proof}

For the sake of a concise notation, let $M_{ij}=\frac{w_i}{w_j} \ \forall i,j=1,...n$. Recall that  $M_{ij}=M_{ik} M_{kj} \ \forall i,j,k=1,...,n$. Without loss of generality, let $k=1$. Then, $M_{ij}=M_{i1}M_{1j}$, showing that every entry of M can be expressed as constant multiple of the first row. That is,
$$
 \begin{bmatrix}
    M_{11}&M_{12}&\hdots&M_{1n}\\
    M_{21}&M_{22}&\hdots&M_{2n}\\
    \vdots&\vdots&\ddots&&\vdots\\
    M_{n1}&M_{n2}&\hdots&M_{nn}\\
\end{bmatrix}
=
\begin{bmatrix}
    1&1&\hdots&1\\
    M_{21}&M_{21}&\hdots&M_{21}\\
    \vdots&\vdots&\ddots&\vdots\\
    M_{n1}&M_{n1}&\hdots&M_{n1}\\
\end{bmatrix}
\begin{bmatrix}
    M_{1 1}\\M_{1 2}\\\vdots\\M_{1 n}
\end{bmatrix}
$$

This shows that every row of a perfectly consistent pair-wise comparison matrix $M$ is a constant multiple of any row. In this case, row 1.

\end{proof}

This implies that matrix $M$ has rank 1 and therefore has only one non-zero eigenvalue and a corresponding non-trivial eigenvector $\Vec{w}$. The entries of this eigenvector $\Vec{w} = \begin{bmatrix} w_1&w_2&\hdots &w_n \end{bmatrix}^T$, then, is considered the weights corresponding to each choice $C_i$. Saaty then notes that the comparison matrix is consistent if and only if $\lambda_{max}=n$, a simple and elegent proof of which can be found in Ed Barbeau's article \citep{Barbeau}.




\end{document}